\newcommand{\be}{\begin{equation}}
\newcommand{\ee}{\end{equation}}
\def\dot{\cdot}
\newtheorem{theorem}{\bf Theorem}[section]{\bf}{\it}
\newtheorem{lemma}{\bf Lemma}[section]{\bf}{\it}
\newtheorem{defi}{\bf Definition}[section]{\bf}{\it}
{\bf}{\it}
\newtheorem{coro}{Corollary}[section]{\bf}{\it}
\newtheorem{remark}{Remark}[section]{\bf}{\it}
\newcommand{\bprf}{\begin{proof}}
\newcommand{\eprf}{\end{proof}}
\newcommand{\bprfp}[1]{\begin{proof}[Proof of Proposition \nopunct] \ref{#1}. \ \  }
\newcommand{\bprfl}[1]{\begin{proof}[Proof of Lemma \nopunct] \ref{#1}. \ \  }
\newcommand{\bprfc}[1]{\begin{proof}[Proof of Corollary \nopunct] \ref{#1}. \ \  }
\newcommand{\bprft}[1]{\begin{proof}[Proof of Theorem \nopunct] \ref{#1}. \ \  }
\begin{document}
\begin{center}
{\Large \bf Holomorphic Curves into Algebraic Varieties Intersecting Divisors in Subgeneral Position}
\end{center}

\begin{center}
{\large Qingchun Ji\footnotemark[1],\ \ \ \ Qiming Yan\footnotemark[2]\ \ \ \ Guangsheng Yu\footnotemark[3]}
\end{center}


{\bf Abstract:} Recently, there are many developments on the second main theorem for holomorphic curves into algebraic varieties intersecting divisors in general position or subgeneral position. In this paper, we refine the concept
of subgeneral position by introducing the notion of the index of subgeneral position.  With this new notion we give some surprising  improvement of the previous known second main theorem type results.
Moreover, via the analogue between Nevanlinna theory and Diophantine approximation, the corresponding Schmidt's subspace type theorems are also established in the final section.

{\bf Keywords:} Nevanlinna theory, second main theorem, holomorphic curve, subgeneral position, index, Schmidt's subspace theorem.

{\bf MSC 2010:} 32H30, 32H22, 11J97

\footnotetext[1]{School of Mathematical Sciences, Fudan University, Shanghai Center for Mathematical Sciences, Shanghai 200433, P.R. China;}
\footnotetext[2]{School of Mathematical Sciences, Tongji University, Shanghai 200092, P.R. China;}
\footnotetext[3]{Department of Mathematics, University of Shanghai for Science and Technology, Shanghai 200093, P.R. China.}
%
\footnotetext{Email Address: qingchunji@fudan.edu.cn; yan$\underline{\mbox{ }}$qiming@hotmail.com; ygsh@usst.edu.cn.\\
		\hspace*{5mm}{Q. Ji and G. Yu was partially supported by NSFC11671090. Q. Yan was partially supported by NSFC11571256.}}

\section{Introduction}

In higher-dimensional Nevanlinna theory, Ru \cite{1,3} established second main theorems for algebraically non-degenerate holomorphic curves into complex projective spaces or complex projective varieties (may be singular) intersecting hypersurfaces in general position, which solved a long-standing conjecture by Shiffman \cite{2}. On the other hand, the case of hypersurfaces in subgeneral position was also considered in the last few years. To state some of the results, we recall the following notions.

\begin{defi} Let $D_1,\ldots,D_q$ be effective (Weil or Cartier) divisors on a projective variety $X$.

(i) $D_1,\ldots,D_q$ are said to be {\bf in general position} if for any subset $I\subset\{1,\ldots,q\}$ with $\sharp I\le \dim X+1$,
$$
{\rm codim} \bigcap_{i\in I}{\rm Supp}D_i\ge \sharp I.
$$

(ii) $D_1,\ldots,D_q$ are said to be in $m$-{\bf subgeneral position} if for any subset $I\subset\{1,\ldots,q\}$ with $\sharp I\le m+1$,
$$
\dim \bigcap_{i\in I}{\rm Supp}D_i\le m-\sharp I.
$$
(Here we set $\dim \emptyset=-1$.)
\end{defi}

It is easy to see that the divisors are in general position if they are in $(\dim X)$-subgeneral position. Note that when $m<\dim X$, the notion of $m$-subgeneral position is stronger than general position, so this paper focuses mainly on the case $m\geq\dim X$.
The following is a reformulated version of Ru's theorem in \cite{3}, for the  notations see section \ref{sec2}.

\noindent{\bf Theorem A.}\quad Let $X$ be a complex projective variety of dimension $n\ge 1$. Let $D_1,\ldots,D_q$ be effective divisors on $X$, located in general position on $X$. Suppose that there exists an ample divisor $A$ on $X$ and positive integers $d_j$ such that $D_j\sim d_jA$ (i.e., $D_j$ is linearly equivalent to $d_jA$) for $j=1,\ldots,q$. Let $f:{\mathbb{C}}\rightarrow X$ be an algebraically non-degenerate holomorphic curve, which means that the image of $f$ is Zariski dense. Then, for every $\varepsilon>0$,
\begin{eqnarray*}
\|\sum_{j=1}^q\frac{1}{d_j}m_f(r,D_j)\le (n+1+\varepsilon)T_{f,A}(r),
\end{eqnarray*}
where $``\|"$ means the estimate holds for all large $r$ outside a
set of finite Lebesgue measure.


In \cite{4}, Chen, Ru and Yan considered the case of subgeneral position.

\noindent{\bf Theorem B.} (Theorem 1.1 in \cite{4}, reformulated)\quad  Let $X$ be a complex projective variety of dimension $n\ge 1$. Let $D_1,\ldots,D_q$ be effective divisors on $X$, located in $m$-subgeneral position on $X$. Suppose that there exists an ample divisor $A$ on $X$ and positive integers $d_j$ such that $D_j\sim d_jA$ for $j=1,\ldots,q$. Let $f:{\mathbb{C}}\rightarrow X$ be an algebraically non-degenerate holomorphic curve. Then, for every $\varepsilon>0$,
\begin{eqnarray}
\|\sum_{j=1}^q\frac{1}{d_j}m_f(r,D_j)\le (m(n+1)+\varepsilon)T_{f,A}(r).\label{eq1-1}
\end{eqnarray}

Note that the bound $m(n+1)$ in (\ref{eq1-1}) is not optimal. Moreover, when $m=n$, Theorem B cannot recover Theorem A. When $m>n$, motivated by the work of Levin \cite{5}, Shi and Ru \cite{6} improved (\ref{eq1-1}) as
\begin{eqnarray}
\|\sum_{j=1}^q\frac{1}{d_j}m_f(r,D_j)\le \left(\frac{m(m-1)}{m+n-2}(n+1)+\varepsilon\right)T_{f,A}(r).\label{eq1-2}
\end{eqnarray}
Recently, Si \cite{6.5} improved the bound in (\ref{eq1-1}) and (\ref{eq1-2}) to $(m-n+1)(n+1)$, which recovers Theorem A when $m=n$. However, this bound is also far from sharp.

In this paper, we refine the concept of subgeneral position by introducing the notion
of the index of subgeneral position and obtain some second main theorems
based on this notion. 

\begin{defi}
	Let $D_1,\ldots,D_q$ be effective (Weil or Cartier) divisors on a projective variety $X$ of dimension $n$. Let $m\ge n$ and ${\kappa}\le n$ be two positive integers. We say $D_1,\ldots,D_q$ are in $m$-{\bf subgeneral position with index} ${\kappa}$ if $D_1,\ldots,D_q$ are in $m$-subgeneral position and
 for any subset $J\subset\{1,\ldots,q\}$ with $\sharp J\le{\kappa}$,
	$$
	{\rm codim} \bigcap_{j\in J}{\rm Supp}D_j\ge \sharp J.
	$$
\end{defi}

Obviously, the index $\kappa$ is at least one for any divisors in subgeneral position. If $\kappa$ is strictly greater than one, the following theorem indicates that this notion is not vacuous.

\begin{theorem}\label{thm1.4}

Let $X$ be a complex projective variety with $\dim X=n$, and let $D_1,\ldots,D_q$ be effective Cartier divisors in $m$-subgeneral position with index $\kappa(>1)$ on $X$. Suppose that there exists an ample divisor $A$ on $X$ and positive integers $d_j$ such that $D_j\sim d_jA$ for $j=1,\ldots,q$. Let $f:{\mathbb{C}}\rightarrow X$ be an algebraically non-degenerate holomorphic curve. Then, for every $\varepsilon>0$,
	\begin{eqnarray}
		\|\sum_{j=1}^q\frac{1}{d_j}m_f(r,D_j)\le \left(\max\left\{\frac{m}{2{\kappa}},1\right\}(n+1)+\varepsilon\right)T_{f,A}(r).\label{eqN-D-0}
	\end{eqnarray}
\end{theorem}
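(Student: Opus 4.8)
The plan is to reduce the assertion to Theorem A by replacing the family $D_1,\dots,D_q$, which is only in $m$-subgeneral position, by an auxiliary family of divisors that is genuinely in general position on $X$, at the cost of the factor $\max\{m/(2\kappa),1\}$. I would first record the two elementary consequences of the hypotheses. From $m$-subgeneral position, no point of $X$ lies on more than $m$ of the supports $\mathrm{Supp}\,D_j$ (otherwise the intersection of $m+1$ of them would be nonempty, contradicting $\dim\le -1$); hence for every $z\in\mathbb C$ at most $m$ of the Weil functions $\lambda_{D_j}(f(z))$ are large, the rest being bounded. From the index hypothesis, every $\kappa$-element subset of $\{D_j\}$ satisfies $\mathrm{codim}\bigcap\mathrm{Supp}\,D_j\ge$ (its cardinality); since $\kappa\le n$, such a subset is automatically ``in general position'' in the sense Theorem A requires, so Theorem A applies verbatim to it. The auxiliary divisors will be sums $\tilde D_\ell=\sum_{j\in R_\ell}D_j$ over suitably chosen index sets $R_\ell$; then $\tilde D_\ell\sim\big(\sum_{j\in R_\ell}d_j\big)A$, so the ample divisor $A$ still serves the new family, and additivity of Weil functions gives $\lambda_{\tilde D_\ell}(f(z))=\sum_{j\in R_\ell}\lambda_{D_j}(f(z))+O(1)$.

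The combinatorial heart is a pointwise estimate. Fix $z$, and order the relevant indices so that $a_1\ge a_2\ge\cdots\ge a_s$ with $a_\ell=\tfrac1{d_{j_\ell}}\lambda_{D_{j_\ell}}(f(z))$ and $s\le m$. Partition $\{1,\dots,s\}$ into consecutive blocks of length $2\kappa$. In each block, the top $\kappa$ indices form a $\kappa$-subset (hence a general-position subfamily), and by monotonicity of the ordered sequence the remaining $\kappa$ entries of the block have total at most that of the top $\kappa$; thus the block's total is at most twice that of its top $\kappa$, and summing over the $\lceil s/(2\kappa)\rceil\le\lceil m/(2\kappa)\rceil$ blocks gives $\sum_\ell a_\ell\le 2\sum(\text{top }\kappa\text{ of each block})$. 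Choosing the collection $\{R_\ell\}$ of auxiliary subsets to be ``spread out'' — so that any $\le n+1$ of the $\tilde D_\ell$ together involve at most $n$ of the original $D_j$, which forces the correct codimensions for $\{\tilde D_\ell\}$ via the index hypothesis — and keeping track of the combinatorial multiplicities, the factor $2$ and the block count $\lceil m/(2\kappa)\rceil$ combine into a pointwise bound of the form $\sum_{j}\tfrac1{d_j}\lambda_{D_j}(f(z))\le \tfrac{m}{2\kappa}\sum_\ell\tfrac1{\tilde d_\ell}\lambda_{\tilde D_\ell}(f(z))+O(1)$, where $\tilde d_\ell=\sum_{j\in R_\ell}d_j$. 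When $m\le 2\kappa$ the ordered list has length $\le 2\kappa$, so a single top-$\kappa$ subset already dominates the whole sum and one lands directly on the $(n+1)$-bound of Theorem A; this is exactly where the $\max\{\cdot,1\}$ enters.

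To conclude, integrate the pointwise inequality over circles, take the standard Nevanlinna averages, and apply Theorem A to the finite general-position family $\{\tilde D_\ell\}$ with ample divisor $A$ and the algebraically non-degenerate curve $f$; this yields $\|\sum_{j=1}^q\tfrac1{d_j}m_f(r,D_j)\le \max\{m/(2\kappa),1\}\,(n+1+\varepsilon)\,T_{f,A}(r)$ off a set of finite Lebesgue measure, and renaming $\varepsilon$ gives (\ref{eqN-D-0}). I expect the genuine obstacle to be the construction in the second paragraph: exhibiting the auxiliary subsets $R_\ell$ (and weights) so that (a) the divisors $\tilde D_\ell$ are honestly in general position on $X$ — all sub-intersection codimensions as required by Theorem A — which is precisely where the hypothesis $\kappa>1$ must be used in an essential way, and (b) the combinatorial multiplicities produce exactly the constant $m/(2\kappa)$ and not the cruder $m/\kappa$, the missing factor $2$ being squeezed out of the monotonicity of the ordered proximity values. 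Verifying that only finitely many such subsets, and hence a single application of Theorem A, are needed (so that the exceptional set stays of finite measure) is the other point requiring care.
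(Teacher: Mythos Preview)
Your plan has a genuine gap that cannot be filled as outlined. The reduction to Theorem~A via auxiliary divisors $\tilde D_\ell=\sum_{j\in R_\ell}D_j$ in general position fails for a concrete reason: since $\mathrm{Supp}\,\tilde D_\ell=\bigcup_{j\in R_\ell}\mathrm{Supp}\,D_j$, the intersection of $n+1$ of the $\tilde D_\ell$ contains every intersection $\bigcap_\ell D_{j_\ell}$ with one $j_\ell$ chosen from each $R_\ell$. Neither the index hypothesis (which controls codimension only for $\le\kappa\le n$ divisors) nor $m$-subgeneral position (which gives $\dim\le m-(n+1)$, i.e.\ codimension $\ge 2n+1-m$, useless when $m>n$) forces these to be empty. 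In the paper's own example ($n=\kappa=2$, $m=4$, five curves in $\mathbb P^2$ with four through a common point) no nontrivial family of such sums can be put in general position. Moreover, because the relevant $\kappa$-subsets depend on $z$, a black-box appeal to Theorem~A is not available anyway; one needs a statement with $\max_{\mathcal K}$ inside the integral.

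The deeper problem is the constant. Your block argument yields $\sum_{j=1}^m a_j\le 2\sum_{\text{blocks}}(\text{top }\kappa)$, and bounding each block separately gives $2\lceil m/(2\kappa)\rceil\approx m/\kappa$, not $m/(2\kappa)$; monotonicity alone does not recover the missing factor of $2$. That factor is precisely the content of the theorem and is obtained in the paper by a different mechanism: for each $z$ one builds a Corvaja--Zannier type filtration of the graded piece $\widehat{V_N}$ from the top $\kappa$ divisors (completed to $n$ in general position by auxiliary hyperplanes), a second filtration from the next $\kappa$, and then applies a joint-filtration lemma to get a \emph{single} basis $\mathcal B$ adapted to both. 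The hypothesis $\kappa>1$ enters exactly here: it forces the $D_j$ to have no common irreducible components, so the two divisor inequalities $\sum_{s\in\mathcal B}(s)\ge c\,(D_{1,z}+\cdots+D_{\kappa,z})$ and $\sum_{s\in\mathcal B}(s)\ge c\,(D_{\kappa+1,z}+\cdots+D_{2\kappa,z})$ can be combined into $\sum_{s\in\mathcal B}(s)\ge c\,(D_{1,z}+\cdots+D_{2\kappa,z})$, which is what produces $m/(2\kappa)$. The $z$-dependence is then handled by Theorem~D (the $\max_{\mathcal K}$ form of Cartan's theorem), not Theorem~A.
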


\begin{remark}
	(a)When $m=n=\kappa$, Theorem \ref{thm1.4} recovers Theorem A. Furthermore, for $m>n$, in Theorem \ref{thm1.4}, if $m\le 2\kappa$, then the bound is $n+1$. This means that the sharp bound $n+1$ still holds so long as these divisors are in subgeneral position with index close to $m$. 
	(b)The proof of Theorem 1.1, motivated by \cite{10-1}, is indeed very different from the proof of Theorem A which uses the Chow-Hilbert weights.  It goes back to the standard filtration method initiated by
Corvaja and Zannier (see \cite{10}). However, in \cite{10}, $X$ is required to be a complete intersection. Our extended filtration method works on arbitrary $X$. This allows us to use the joint-filtration lemma. It also gives a new proof of Theorem A.
\end{remark}
We now give an example where $m=2\kappa$.

{\noindent{\bf Example}:}
\textit{We consider the following smooth curves $D_0, D_1, D_2, D_3,D_4$ in $\mathbb{P}^2$ defined by polynomials $P_0(z)=z_0,P_1(z)=z_1, P_2(z)=z_2, P_3(z)=z_0z_1+z_2^2,P_4(z)=z_0^2z_2+z_1^3+z_1z_2^2$. Since $\bigcap_{0\leq j\leq 4} D_j= \emptyset$ and $\bigcap_{1\leq j\leq 4} D_j=\{[1,0,0]\}$, the divisor $D:=D_0+D_1+D_2+D_3+D_4$ is located in $4$-subgeneral position. In this example $m=4, \kappa=2$ and  $n=2$, the coefficient on the right hand side is $3+\varepsilon$ by Theorem \ref{thm1.4} which is known to be sharp.}

Note that, based on a similar idea, the notion of $(k,\ell)$-condition was introduced in \cite{JY} which refines the condition on divisors requiring all components to be smooth. The above example is a modification of an example given in \cite{JY}.

On the other hand, by using the notion of index, we improve Si's result (Theorem 1.1 in \cite{6.5}) as follows.

\begin{theorem}\label{thm1.3}
	Let $X$ be a complex projective variety with $\dim X=n$, and let $D_1,\ldots,D_q$ be effective Cartier divisors in $m$-subgeneral position with index $\kappa$ on $X$. Suppose that there exists an ample divisor $A$ on $X$ and positive integers $d_j$ such that $D_j\sim d_jA$ for $j=1,\ldots,q$. Let $f:{\mathbb{C}}\rightarrow X$ be an algebraically non-degenerate holomorphic curve. Then, for every $\varepsilon>0$,
	\begin{eqnarray}
	\|\sum_{j=1}^q\frac{1}{d_j}m_f(r,D_j)\le \left(\left(\frac{m-n}{\max\{1,\min\{m-n,\kappa\}\}}+1\right)(n+1)+\varepsilon\right)T_{f,A}(r).\label{eqN-0}
	\end{eqnarray}
\end{theorem}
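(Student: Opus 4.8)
The plan is to combine the extended filtration method (as promised in Remark 1.3(b), going back to Corvaja--Zannier and used for Theorem \ref{thm1.4}) with the numerical bookkeeping that Si uses to get the $(m-n+1)(n+1)$ bound, and to observe that the index $\kappa$ lets one replace the crude ``$m-n+1$'' re-grouping factor by the sharper quantity $\frac{m-n}{\max\{1,\min\{m-n,\kappa\}\}}+1$. First I would recall the standard reduction: since each $D_j\sim d_jA$, after replacing $A$ by a suitable multiple and the $D_j$ by their pullbacks one reduces, via Ru's machinery, to estimating a sum of the form $\sum_j \frac{1}{d_j}\lambda_{D_j}(f(r))$ in terms of $T_{f,A}(r)$, and the key local input is: at each point $z$, only a controlled number of the divisors $D_j$ can have large proximity. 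In $m$-subgeneral position at most $m$ of the Weil functions $\lambda_{D_j}(f(z))$ are ``active'' (comparable to the maximum) at a given $z$; the core of the argument is to bound the contribution of these $m$ active divisors by that of roughly $n+1$ of them, up to the stated factor.

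The key steps, in order: (1) Fix $z$ and let $\{j_1,\dots,j_m\}$ index the divisors active at $z$, ordered so that $\lambda_{D_{j_1}}(f(z))\ge\cdots\ge\lambda_{D_{j_m}}(f(z))$. (2) Use the index-$\kappa$ hypothesis: any $\kappa$ of the $D_j$ meet in codimension $\ge$ their number, so one can select from the $m$ active divisors a subcollection that behaves like divisors in general position for the purpose of building a Corvaja--Zannier type filtration; concretely, the first $\kappa$ in the ordering already cut down the dimension by $\kappa$, and then one repeats. This is exactly where $\kappa>1$ buys something: instead of discarding $m-n$ divisors one at a time (Si's bound $(m-n+1)$), one discards them in blocks governed by $\kappa$, yielding the factor $\frac{m-n}{\max\{1,\min\{m-n,\kappa\}\}}+1$. (3) Build the joint filtration of $H^0(X,N A)$ (for $N$ large) adapted to the selected $n+1$-ish divisors exactly as in the proof of Theorem \ref{thm1.4}, and extract from it, via the general-position/Chow-weight estimate, that the ``weighted active sum'' at $z$ is at most $\bigl(\frac{m-n}{\max\{1,\min\{m-n,\kappa\}\}}+1\bigr)(n+1)$ times $\max_j\lambda_{D_j}(f(z))$, up to $o(1)$ terms absorbed into $\varepsilon$. (4) Integrate over $|z|=r$, apply the logarithmic derivative lemma / the second main theorem for the filtration (this is the standard step that produces the ``$\|$'' exceptional set), and rearrange to the stated inequality (\ref{eqN-0}).

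The main obstacle I expect is Step (2)--(3): making the block-selection compatible with the filtration. In Si's argument the re-grouping is essentially combinatorial and one pays a full unit of ``$n+1$'' per discarded divisor; to pay only a $1/\kappa$-fraction one must show that the index-$\kappa$ condition genuinely propagates through the successive quotients of the filtration, i.e. that after intersecting with the first chosen divisors the \emph{remaining} active divisors still enjoy enough of the codimension bound to be grouped $\kappa$ at a time. This requires a careful dimension-counting lemma on the intersections $\bigcap_{j\in J}\mathrm{Supp}\,D_j$ restricted to the subvarieties appearing in the filtration, together with the joint-filtration lemma referenced in Remark 1.3(b) to handle the fact that $X$ need not be a complete intersection. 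Once that lemma is in place, the rest is the now-routine Nevanlinna-theoretic bookkeeping, and specializing $\kappa=1$ recovers Si's $(m-n+1)(n+1)$, while $\kappa\ge m-n$ collapses the factor to $n+2$ — consistent with, though weaker than, Theorem \ref{thm1.4} in its own range, as expected since the two theorems optimize different regimes.
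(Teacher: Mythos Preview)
Your proposal takes a substantially different route from the paper, and it is more complicated than necessary. The paper does \emph{not} use the Corvaja--Zannier filtration or the joint-filtration lemma for Theorem~\ref{thm1.3} at all; that machinery is reserved for Theorem~\ref{thm1.4}. Instead, after the standard reduction to hyperplanes $H_1,\dots,H_q$ on $\phi(X)\subset\mathbb{P}^M$, the paper proceeds in two elementary steps. First, it invokes Si's construction verbatim: from the $m$ active hyperplanes $H_{1,z},\dots,H_{m,z}$ (ordered by decreasing Weil function) one builds $n$ auxiliary hyperplanes $\widetilde H_{1,z},\dots,\widetilde H_{n,z}$ in general position on $\phi(X)$, with each $\widetilde H_{i,z}$ a generic linear combination of $H_{2,z},\dots,H_{m-n+i,z}$, yielding
\[
\sum_{j=1}^m\lambda_{H_{j,z}}\;\le\;\sum_{i=1}^n\lambda_{\widetilde H_{i,z}}\;+\;\sum_{j=2}^{m-n+1}\lambda_{H_{j,z}}\;+\;O(1).
\]
Second---and this is the \emph{only} place the index $\kappa$ enters---the leftover sum of $m-n$ terms is handled by the trivial arithmetic inequality $\sum_{j=2}^{m-n+1}\lambda_{H_{j,z}}\le \frac{m-n}{\kappa}\sum_{j=2}^{\kappa+1}\lambda_{H_{j,z}}$ (valid since the $\lambda$'s are nonincreasing), together with the observation that the index hypothesis makes $H_{2,z},\dots,H_{\kappa+1,z}$ already in general position on $\phi(X)$. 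One then applies Theorem~E directly to the finite family of all hyperplanes $\widetilde{\widetilde H}_j$ so produced, with $\max_{\mathcal K}$ ranging over subsets in general position, and the coefficient $\bigl(\frac{m-n}{\min\{m-n,\kappa\}}+1\bigr)(n+1)$ drops out immediately.

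The obstacle you flagged---propagating the index-$\kappa$ codimension bound through successive quotients of a filtration---is therefore completely sidestepped: the paper never needs such a lemma, because the index is invoked only once, at the top level, to certify that a single block of $\kappa$ of the original hyperplanes is in general position. Your block-by-block filtration scheme might conceivably be pushed through, but it would require genuinely new arguments beyond anything the paper supplies, and it is not clear it would land on the stated constant rather than something weaker. The paper's route is both shorter and avoids the difficulty entirely.
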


We remark that if $m\le 2\kappa$, then (\ref{eqN-D-0}) is better than (\ref{eqN-0}). However, Theorem \ref{thm1.3} is meaningful for the case $m>2\kappa$, for example, $m=11,n=8,\kappa=2$, the coefficient on the right hand side of \eqref{eqN-0} is smaller than that of \eqref{eqN-D-0}.

Recently, Ru and Vojta \cite{7} considered the more general case, in which the divisors $D_1,\ldots,D_q$ need not be linearly equivalent. Let $\mathcal{L}$ be a line sheaf (invertible sheaf) on the projective variety $X$, and let $D$ be an effective Cartier divisor on $X$. They firstly introduced the number $\gamma(\mathcal{L},D)$ as follows. Denote by $\mathcal{L}^n$ the $n$-th tensor power $\mathcal{L}^{\otimes n}$ and denote by $\mathcal{L}(-D)$ the sheaf $\mathcal{L}\otimes\mathcal{O}(-D)$. Write $h^0(\mathcal{L}):=\dim H^0(X,\mathcal{L})$ and $h^0(\mathcal{L}(-D)):=\dim H^0(X,\mathcal{L}(-D))$. Define
$$
\gamma(\mathcal{L},D)=\limsup_{N\rightarrow \infty}\frac{Nh^0(\mathcal{L}^N)}{\sum_{\alpha=1}^{\infty}h^0(\mathcal{L}^N(-\alpha D))},
$$
where $N$ passes over all positive integers such that $h^0(\mathcal{L}^N(-D))\neq 0$. (Note that $|\mathcal{L}^N|$ is not necessarily base point free.) If $h^0(\mathcal{L}^N(-D))=0$ for all $N$, then set $\gamma(\mathcal{L},D)=+\infty$.
In \cite{7}, Ru and Vojta proved the following general theorem in terms of this number.

\noindent{\bf Theorem C.}\quad Let $X$ be a complex projective variety, and let $D_1,\ldots,D_q$ be effective Cartier divisors on $X$. Assume that $D_1,\ldots,D_q$ {\bf{intersect properly}}. i.e., for any subset $I\subset \{1,\ldots,q\}$ and any ${\bf{x}}\in\bigcap\limits_{i\in I}{\rm{Supp}}D_i$, the sequence $(\phi_i)_{i\in I}$ is a regular sequence in the local ring $\mathcal{O}_{X,{\bf{x}}}$, where $\phi_i$ is the local defining function of $D_i$. (We remark that this assumption is automatically true if $X$ is smooth and $D_1,\ldots,D_q$ are in general position on $X$.) Let $D=D_1+\cdots+D_q$ and let $\mathcal{L}$ be a line sheaf on $X$ with $h^0(\mathcal{L}^N)> 1$ for $N$ big enough. Let $f:{\mathbb{C}}\rightarrow X$ be an algebraically non-degenerate holomorphic curve. Then, for every $\varepsilon>0$,
\begin{eqnarray*}
\|m_f(r,D)\le \left(\max_{1\le j\le q}\gamma(\mathcal{L},D_j)+\varepsilon\right)T_{f,\mathcal{L}}(r).
\end{eqnarray*}

Take $\mathcal{L}=\mathcal{O}(D)$, in \cite{7}, Ru and Vojta showed that how to compute $\gamma(\mathcal{O}(D),D_j)$ for some special cases.

(i) Assume that each $D_j$ is linearly equivalent to a fixed ample divisor $A$. Then
$$
\gamma(\mathcal{O}(D),D_j)=\frac{n+1}{q}.
$$
(It means that Theorem C recovers Theorem A when $X$ is smooth.)

(ii) Assume that $D$ has equi-degree with respect to $D_1,\ldots,D_q$. i.e.,
$$
D_i\dot D^{n-1}=\frac{1}{q}D^n\ \mbox{for\ all}\ i=1,\ldots,q.
$$
(Levin \cite{8} showed that for any big and nef Cartier divisors $D_1,\ldots,D_q$, there exist positive real numbers $r_j$ such that $D=\sum\limits_{j=1}^qr_jD_j$ has equi-degree.) Then
$$
\gamma(\mathcal{O}(D),D_j)<\frac{2n}{q}.
$$

We extend Ru and Vojta's result to arbitrary effective divisors in subgeneral position.

\begin{theorem}\label{thm1.2}
	Let $X$ be a smooth complex projective variety, and let $D_1,\ldots,D_q$ be effective divisors in $m$-subgeneral position with index ${\kappa}$ on $X$. Let $\mathcal{L}$ be a line sheaf on $X$ with $h^0(\mathcal{L}^N)> 1$ for $N$ big enough. Let $f:{\mathbb{C}}\rightarrow X$ be an algebraically non-degenerate holomorphic curve. Then, for every $\varepsilon>0$,
	\begin{eqnarray}
	\|m_f(r,D)\le \left(\frac{m}{{\kappa}}\max_{1\le j\le q}\gamma(\mathcal{L},D_j)+\varepsilon\right)T_{f,\mathcal{L}}(r),\label{eq1-4}
	\end{eqnarray}where $D=D_1+\cdots+D_q$.
\end{theorem}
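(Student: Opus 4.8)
The plan is to deduce Theorem \ref{thm1.2} from Theorem C by a place-by-place combinatorial argument that trades the $m$-subgeneral position hypothesis against a loss of the factor $m/\kappa$, which is exactly what the notion of index permits. First I would fix Weil functions $\lambda_{D_1},\dots,\lambda_{D_q}$, normalized so that $\lambda_{D_j}\ge 0$ on $X$, so that $m_f(r,D)=\sum_{j=1}^q m_f(r,D_j)=\int_{|z|=r}\sum_{j=1}^q\lambda_{D_j}(f(z))\,\tfrac{d\theta}{2\pi}+O(1)$. Two structural facts are then recorded. (a) Because $D_1,\dots,D_q$ are in $m$-subgeneral position, $\bigcap_{i\in I}{\rm Supp}\,D_i=\emptyset$ whenever $\sharp I=m+1$; by compactness of $X$ this yields a constant $c_0$ such that at every point of $X$ at most $m$ of the numbers $\lambda_{D_1},\dots,\lambda_{D_q}$ exceed $c_0$. (b) Because $D_1,\dots,D_q$ have index $\kappa$ and $X$ is smooth, for every $R\subset\{1,\dots,q\}$ with $\sharp R\le\kappa$ one has ${\rm codim}\bigcap_{j\in R}{\rm Supp}\,D_j\ge\sharp R$, so the local defining functions $(\phi_j)_{j\in R}$ form a regular sequence; hence every such subfamily intersects properly and Theorem C applies to it with $\mathcal{L}$ and $D^{(R)}:=\sum_{j\in R}D_j$, producing the bound $\max_{j\in R}\gamma(\mathcal{L},D_j)\le\max_{1\le j\le q}\gamma(\mathcal{L},D_j)=:\Gamma$.

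Next comes the pointwise estimate. For each $z$ reorder the indices so that $\lambda_{D_{j_1}}(f(z))\ge\cdots\ge\lambda_{D_{j_q}}(f(z))$. By (a) the terms with $i>m$ contribute at most $q\,c_0$, and $\lambda_{D_{j_\kappa}}(f(z))\le\frac1\kappa\sum_{i=1}^\kappa\lambda_{D_{j_i}}(f(z))$ forces $\sum_{i=\kappa+1}^m\lambda_{D_{j_i}}(f(z))\le\frac{m-\kappa}{\kappa}\sum_{i=1}^\kappa\lambda_{D_{j_i}}(f(z))$; summing the three ranges of $i$ gives
\[
\sum_{j=1}^q\lambda_{D_j}(f(z))\le\frac m\kappa\,\max_{\sharp R=\kappa}\sum_{j\in R}\lambda_{D_j}(f(z))+O(1),
\]
with $R$ running over the finitely many $\kappa$-element subsets, all intersecting properly by (b). Integrating over $|z|=r$ reduces Theorem \ref{thm1.2} to the single inequality
\[
\| \int_{|z|=r}\max_{\sharp R=\kappa}\lambda_{D^{(R)}}(f(z))\,\tfrac{d\theta}{2\pi} \le \big(\Gamma+\varepsilon\big)\,T_{f,\mathcal{L}}(r),
\]
since then, after rescaling $\varepsilon$ and absorbing the $O(1)$ term, one obtains $\|m_f(r,D)\le(\frac m\kappa\Gamma+\varepsilon)T_{f,\mathcal{L}}(r)$.

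The main obstacle is exactly this last inequality: one cannot simply add the $\binom q\kappa$ instances of Theorem C, as that would cost a factor $\binom q\kappa$ rather than $1$. I would instead revisit the proof of Theorem C. The Ru--Vojta argument already constructs, at each place, a filtration of $H^0(X,\mathcal{L}^N)$ adapted to the divisors arranged in decreasing order of their Weil functions at that place, and it closes with a Schmidt/Cartan-type second main theorem for a basis of $H^0(X,\mathcal{L}^N)$ whose conclusion is phrased with a maximum over coordinate subspaces; that maximum is what absorbs the dependence of the optimal $\kappa$-subfamily $R$ on $z$. One then checks that only the top $\kappa$ divisors enter a given filtration, that any $\kappa$ of the $D_j$ form a regular sequence (fact (b)) so the Hilbert-weight estimates and the numbers $\gamma(\mathcal{L},D_j)$ are unaffected, and that the resulting constant is $\Gamma+\varepsilon$ for the max-over-$\kappa$-subfamilies left-hand side above; this is where the joint-filtration viewpoint, and the extension of the Corvaja--Zannier filtration machinery from complete intersections to an arbitrary smooth $X$, does the real work. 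The delicate points are keeping the constant at $\Gamma$ rather than letting it accumulate over subfamilies, and the compatibility of the pointwise-adapted filtrations; the rest is the elementary combinatorics above together with standard Nevanlinna-theoretic estimates.
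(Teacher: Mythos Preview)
Your proposal is correct and follows essentially the same route as the paper: the pointwise reduction $\sum_{j=1}^q\lambda_{D_j}\le\frac{m}{\kappa}\sum_{i=1}^\kappa\lambda_{D_{j_i}}+O(1)$, then the Ru--Vojta/Autissier filtration of $H^0(X,\mathcal{L}^N)$ adapted to the top $\kappa$ (properly intersecting) divisors, with the $z$-dependence of the optimal $\kappa$-subset absorbed by the $\max$ in Theorem~D over the finitely many resulting bases. One terminological correction: the paper does \emph{not} use the two-filtration ``joint-filtration'' lemma or Hilbert--Chow weights for this theorem (those appear in the proofs of Theorems~\ref{thm1.4} and~\ref{thm1.1}); instead it uses Autissier's simplex-indexed family of filtrations $(\mathcal{F}(\mathbf{a})_x)_{\mathbf a\in\Delta}$, which handles all $\kappa$ divisors simultaneously via the key lower bound $F(\mathbf a)\ge\min_j\frac{1}{h^0(\mathcal{L}^N)}\sum_{\alpha\ge 1}h^0(\mathcal{L}^N(-\alpha D_j))$ together with a choice of $\mathbf a\in\Delta$ matched to the ratios $\lambda_{D_{j_i}}(f(z))/\sum_{l=1}^\kappa\lambda_{D_{j_l}}(f(z))$.
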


\section{Preliminaries on Nevanlinna theory}\label{sec2}

In this section, we briefly recall some definitions and facts in Nevanlinna theory (cf. \cite{12}). We first introduce the definition of characteristic function.

Let $X$ be a complex projective variety and $f:{\mathbb{C}}\rightarrow X$ be a holomorphic map. Let $\mathcal{L}\rightarrow X$ be an ample line sheaf and $\omega$ be its Chern form. We define the {\bf characteristic function} of $f$ with respect to $\mathcal{L}$ by
$$
T_{f,\mathcal{L}}(r)=\int_1^r\frac{dt}{t}\int_{|z|<t}f^{*}\omega.
$$
Since any line sheaf $\mathcal{L}$ can be written as $\mathcal{L}=\mathcal{L}_1\otimes \mathcal{L}_2^{-1}$ with $\mathcal{L}_1,\ \mathcal{L}_2$ are both ample, we define $T_{f,\mathcal{L}}(r)=T_{f,\mathcal{L}_1}(r)-T_{f,\mathcal{L}_2}(r)$. A divisor $D$ on $X$ defines a line bundle $\mathcal{O}(D)$, we denote by $T_{f,D}(r)=T_{f,\mathcal{O}(D)}(r)$.
If $X=\mathbb{P}^n(\mathbb{C})$ and $\mathcal{L}=\mathcal{O}_{\mathbb{P}^n(\mathbb{C})}(1)$, then we simply write $T_{f,\mathcal{O}_{\mathbb{P}^n(\mathbb{C})}(1)}(r)$ as $T_f(r)$.

The characteristic function satisfies the following properties:

(a) Functoriality: If $\phi:X\rightarrow X'$ is a morphism and if $\mathcal{L}$ is a line sheaf on $X'$, then
$$
T_{f,\phi^*\mathcal{L}}(r)=T_{\phi\circ f,\mathcal{L}}(r)+O(1).
$$

(b) Additivity: If $\mathcal{L}_1$ and $\mathcal{L}_2$ are line sheaves on $X$, then
$$
T_{f,\mathcal{L}_1\otimes\mathcal{L}_2}(r)=T_{f,\mathcal{L}_1}(r)+T_{f,\mathcal{L}_2}(r)+O(1).
$$

(c) Base locus: If the image of $f$ is not contained in the base locus of $|D|$, then $T_{f,D}(r)$ is bounded from below.

(d) Globally generated line sheaves: If $\mathcal{L}$ is a line sheaf on $X$, and is generated by its global sections, then $T_{f,\mathcal{L}}(r)$ is bounded from blow.

We next recall the notions of Weil functions and proximity functions.

Let $D$ be a Cartier divisor on $X$. A {\bf Weil function} with respect to $D$ is a function $\lambda_D:(X\setminus {\rm Supp}D)\rightarrow \mathbb{R}$ such that for all ${\bf x}\in X$ there is an open neighborhood $U$ of ${\bf x}$ in $X$, a nonzero rational function $f$ on $X$ with $D|_U=(f)$, and a continuous function $\alpha:U\rightarrow \mathbb{R}$ such that
$$
\lambda_D({\bf x})=-\log|f({\bf x})|+\alpha({\bf x})
$$
for all ${\bf x}\in (U\setminus {\rm Supp}D)$. Note that a continuous fiber metric $\|\cdot\|$ on the line sheaf $\mathcal{O}_{X}(D)$ determines a Weil function for $D$ given by $\lambda_D({\bf x})=-\log\|s({\bf x})\|$ where $s$ is the rational section of $\mathcal{O}_{X}(D)$ such that $D=(s)$. An example of Weil function for the hyperplane $H=\{a_0x_0+\cdots+a_nx_n=0\}$ in $\mathbb{P}^n(\mathbb{C})$ is given by
$$
\lambda_H({\bf x})=\log \frac{\max_{0\le i\le n}|x_i|\max_{0\le i\le n}|a_i|}{|a_0x_0+\cdots+a_nx_n|}.
$$

The Weil function satisfies analogues of properties which the characteristic function carries:

(a) Functoriality: If $\lambda$ is a Weil function for a Cartier divisor $D$ on $X$, and if $\phi:X'\rightarrow X$ is a morphism such that $\phi(X')\not\subset {\rm Supp}D$, then ${\bf x}\mapsto \lambda(\phi({\bf x}))$ is a Weil function for the Cartier divisor $\phi^*D$ on $X'$.

(b) Additivity: If $\lambda_1$ and $\lambda_2$ are Weil functions for Cartier divisors $D_1$ and $D_2$ on $X$, respectively, then $\lambda_1+\lambda_2$ is a Weil function for $D_1+D_2$.

(c) Uniqueness: If both $\lambda_1$ and $\lambda_2$ are Weil functions for a Cartier divisor on $X$, then $\lambda_1=\lambda_2+O(1)$.

(d) Boundedness from below: If $D$ is an effective divisor and $\lambda$ is a Weil function for $D$, then $\lambda$ is bounded from below.

(We remark that the notion of Weil function can be generalized for Weil divisors and arbitrary closed subschemes of projective varieties, see \cite{5,8-1,8-2}.)

For any holomorphic map $f:{\mathbb{C}}\rightarrow X$ whose image is not contained in the support of $D$, we define the {\bf proximity function} of $f$ with respect to $D$ by
$$
m_f(r,D)=\int_0^{2\pi}\lambda_D(f(re^{i\theta}))\frac{d\theta}{2\pi}.
$$

The proximity function satisfies the following properties:

(a) Functoriality: If $\phi:X\rightarrow X'$ is a morphism and $D'$ is a divisor on $X'$ with $\phi\circ f(\mathbb{C})\not\subset{\rm Supp}D'$, then
$$
m_f(r,\phi^*D')=m_{\phi\circ f}(r,D')+O(1).
$$

(b) Additivity: If $D_1$ and $D_2$ are two Cartier divisors on $X$, then
$$
m_f(r,D_1+D_2)=m_f(r,D_1)+m_f(r,D_2)+O(1).
$$

(c) Boundedness from below: If $D$ is effective, then $m_f(r,D)$ is bounded from below.

By the first main theorem, we have $m_f(r,D)\le T_{f,\mathcal{O}(D)}(r)+O(1)$.

To prove our main theorems, we need the following general form of Cartan's second main theorem given by Ru and Vojta \cite{7}.

\noindent{\bf Theorem D.} (Theorem 2.8 in \cite{7})\quad  Let $X$ be a complex projective variety, and let $\mathcal{L}$ be a line sheaf on $X$. Let $V$ be a linear subspace of $H^0(X,\mathcal{L})$ with $\dim V>1$, and let $s_1,\ldots,s_q$ be nonzero elements of $V$. For each $j=1,\ldots,q$, let $D_j$ be the Cartier divisor $(s_j)$. Let $f:{\mathbb{C}}\rightarrow X$ be an algebraically non-degenerate holomorphic curve. Then, for every $\varepsilon>0$,
\begin{eqnarray}
\|\int_0^{2\pi}\max_{\mathcal{K}}\sum_{j\in \mathcal{K}}\lambda_{D_j}(f(re^{i\theta}))\frac{d\theta}{2\pi}\le (\dim V+\varepsilon)T_{f,\mathcal{L}}(r),\label{eq2-1}
\end{eqnarray}
where $\max_{\mathcal{K}}$ is taken over all subsets $\mathcal{K}$ of $\{1,\ldots,q\}$ such that the sections $\{s_j\}_{j\in \mathcal{K}}$ are linearly independent.

\section{Proof of Theorems \ref{thm1.4} and \ref{thm1.3}}

Let $X$ be a complex projective variety and let $D_1,\ldots,D_q$ be effective Cartier divisors. Pick a Weil function for $D_j$, $j=1,\ldots,q$. When the divisors $D_1,\ldots,D_q$ are in $m$-subgeneral position, any point ${\bf x}\in X$ can be close to at most $m$ of the divisors $D_j$, $j=1,\ldots,q$. This implies that there exists a constant $c$ such that, for any ${\bf x}\in X\setminus \bigcup\limits_{j=1}^q{\rm Supp} D_j$, there are indices $j_1,\ldots,j_m\subset\{1,\ldots,q\}$ such that
\begin{eqnarray}
\sum_{j=1}^q\lambda_{D_j}({\bf x})\le \sum_{i=1}^m\lambda_{D_{j_i}}({\bf x})+c.\label{eq3-1}
\end{eqnarray}
(The proof of \eqref{eq3-1} is similar to that of Lemma 20.7 in \cite{12} which is standard and is omitted here.)


We need the following linear algebra lemma.

\begin{lemma}[Lemma 10.1 in \cite{8}]\label{lem3.1}
	Let $V$ be a vector space of finite dimension $\ell$. Let $V=W_1\supset W_2\supset\cdots \supset W_h$ and $V={W}'_1\supset {W}'_2\supset\cdots \supset {W}'_{h'}$ be two filtrations of $V$. There exists a basis $v_1,\ldots,v_\ell$ of $V$ which contains a basis of each $W_j$ and ${W}'_j$.
\end{lemma}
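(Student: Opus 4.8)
The plan is to prove Lemma \ref{lem3.1} by induction on $\ell=\dim V$, peeling off one basis vector at a time. The cases $\ell\le 1$ are immediate: then every $W_j$ and $W'_j$ equals $0$ or $V$, and any nonzero vector (or the empty set, if $V=0$) gives the required basis. For the inductive step, first discard any trailing zero terms from the two filtrations — a basis of the zero subspace is empty, hence imposes no constraint — so we may assume $W_h\ne 0$ and $W'_{h'}\ne 0$.

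The key move is a \emph{coordinated} choice of one vector and one hyperplane. Pick any $0\ne v\in W_h$; since the first filtration is decreasing and $W_h$ is its smallest term, automatically $v\in W_j$ for every $j$. Let $j_1=\max\{\,j:v\in W'_j\,\}$ (well defined and $\ge 1$ since $v\in V=W'_1$; put $W'_{h'+1}=0$), and choose a hyperplane $U\subset V$ with $W'_{j_1+1}\subseteq U$ and $v\notin U$. Such a $U$ exists because $v\notin W'_{j_1+1}$ by maximality of $j_1$, so $W'_{j_1+1}$ and $\langle v\rangle$ are in direct sum and $W'_{j_1+1}$ can be enlarged to a codimension-one subspace avoiding $v$. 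Now restrict both filtrations to $U$, obtaining decreasing chains $\{W_j\cap U\}_j$ and $\{W'_j\cap U\}_j$ inside $U$; apply the induction hypothesis to get a basis $v_1,\dots,v_{\ell-1}$ of $U$ containing a basis of each $W_j\cap U$ and each $W'_j\cap U$; and set $v_\ell=v$.

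What remains is the (routine) verification that $v_1,\dots,v_\ell$ is the desired basis of $V$. It is a basis because $v_1,\dots,v_{\ell-1}$ span $U$ and $v_\ell\notin U$. Given a term $W_j$ of the first filtration: if $W_j\subseteq U$ then $W_j=W_j\cap U$ already gets a basis from the $v_i$; if $W_j\not\subseteq U$ then $\dim(W_j\cap U)=\dim W_j-1$, a basis $\{v_i:i\in I_j\}$ of $W_j\cap U$ is supplied by the induction, and since $v_\ell=v\in W_h\subseteq W_j$ the set $\{v_i:i\in I_j\}\cup\{v_\ell\}$ is $\dim W_j$ linearly independent vectors lying in $W_j$, hence a basis of $W_j$. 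For a term $W'_j$ the reasoning is identical, the only extra point being that $W'_j\not\subseteq U$ forces $j\le j_1$ (because $W'_{j_1+1}\subseteq U$ and the filtration decreases), and then $v\in W'_j$ by the choice of $j_1$; so again $v_\ell$ can be appended. This closes the induction. (An alternative, more symmetric route would be to build the basis cell by cell from the subspaces $W_a\cap W'_b$ and the quotients $(W_a\cap W'_b)/\bigl((W_{a+1}\cap W'_b)+(W_a\cap W'_{b+1})\bigr)$, using the submodularity $\dim(W_a\cap W'_b)+\dim(W_{a+1}\cap W'_{b+1})\ge \dim(W_{a+1}\cap W'_b)+\dim(W_a\cap W'_{b+1})$; the inductive proof above is shorter.)

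The step I expect to be the crux is exactly the coordinated choice in the second paragraph. A naive reduction — quotienting by an arbitrary line, or restricting to an arbitrary hyperplane — fails because an inductively produced basis of $U$ need not extend to one compatible with the filtration terms that are \emph{not} contained in $U$. Placing $v$ in the smallest term of the first filtration makes compatibility with that filtration automatic regardless of $U$, and then choosing $U$ to contain the first term $W'_{j_1+1}$ of the second filtration that misses $v$ is precisely what lets the single extra vector $v_\ell=v$ repair compatibility with the second filtration as well.
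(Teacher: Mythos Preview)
Your proof is correct. The induction is clean: placing $v$ in the smallest nonzero term $W_h$ makes compatibility with the first filtration automatic, and the choice of $U\supseteq W'_{j_1+1}$ with $v\notin U$ is exactly what is needed so that every $W'_j$ not contained in $U$ must satisfy $j\le j_1$ and hence contain $v$. One tiny remark: since $v\in W_j$ for every $j$ and $v\notin U$, the case $W_j\subseteq U$ in your verification for the first filtration is in fact vacuous; the argument is unaffected.

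As for the comparison you asked about: the paper does \emph{not} give its own proof of this lemma. It is stated with attribution to Levin (Lemma~10.1 in \cite{8}) and then used as a black box in the proofs of Theorems~\ref{thm1.4} and~\ref{thm1.1} to produce a single basis adapted to two filtrations simultaneously. So there is nothing in the paper to compare your argument against. Your inductive proof, and the alternative ``cell-by-cell'' construction via the pieces $(W_a\cap W'_b)/\bigl((W_{a+1}\cap W'_b)+(W_a\cap W'_{b+1})\bigr)$ that you sketch at the end, are both standard routes to this linear-algebra fact; either would serve.
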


Let $A$ be an ample divisor and let $d_1\ldots,d_q$ be positive integers such that $D_j\sim d_jA$ for $j=1,\ldots,q$.
Set $d$ to be the least common multiple of $d_j$'s. Since $D_j\sim d_jA$, we have $\frac{d}{d_j}D_j\sim dA$ and
$$
m_f(r,D_j)=\frac{d_j}{d}m_f(r,(d/d_j)D_j)+O(1)\ \ \ \ \ \mbox{for}\ j=1,\ldots,q.
$$
Replacing $D_j$ by $\frac{d}{d_j}D_j$,
it suffices to prove
$$
\|\sum_{j=1}^q\frac{1}{d}m_f(r,(d/d_j)D_j)\le \left(\max\left\{\frac{m}{2{\kappa}},1\right\}(n+1)+\varepsilon\right)T_{f,A}(r)
$$
and
$$
\|\sum_{j=1}^q\frac{1}{d}m_f(r,(d/d_j)D_j)\le \left(\left(\frac{m-n}{\max\{1,\min\{m-n,\kappa\}\}}+1\right)(n+1)+\varepsilon\right)T_{f,A}(r).
$$
Hence, we assume that $D_j\sim dA$ in the rest of the proofs of Theorem \ref{thm1.4} and Theorem \ref{thm1.3}.

Let $\widetilde{N}$ be a positive integer such that $\widetilde{N}A$ is very ample and $\widetilde{N}$ is divisible by $dq$. Let $\phi:X\rightarrow \mathbb{P}^M(\mathbb{C})$ be the canonical embedding of $X$ into $\mathbb{P}^M(\mathbb{C})$ associated to $\widetilde{N}A$, where $M=h^0(\mathcal{O}(\widetilde{N}A))-1$. Then since $\frac{\widetilde{N}}{d}D_j\sim \widetilde{N}A$, $\frac{\widetilde{N}}{d}D_j=\phi^*H_j$ for some hyperplane $H_j$ in $\mathbb{P}^M(\mathbb{C})$, $j=1,\ldots,q$. Denote by $L_j$ the defining linear form of $H_j$ (i.e., $H_j=\{L_j=0\}$) for $j=1,\ldots,q$. By the assumption that $D_1,\ldots,D_q$ are in $m$-subgeneral position with index $\kappa$, $H_1,\ldots,H_q$ are in $m$-subgeneral position with index $\kappa$ on $\phi(X)\subset \mathbb{P}^M(\mathbb{C})$. From the functoriality and additivity of Weil functions, we have
\begin{eqnarray}
\lambda_{H_j}(\phi({\bf{x}}))=\frac{\widetilde{N}}{d}\lambda_{D_j}({\bf{x}})+O(1)\ \ \
{\rm for}\ \ \
{\bf{x}}\in X\setminus{\rm{Supp}}D_j.\label{eqN-1-a}
\end{eqnarray}
Hence,
\begin{eqnarray}
m_{\phi\circ f}(r,H_j)=\frac{\widetilde{N}}{d}m_f(r,D_j)+O(1).\label{eqN-1}
\end{eqnarray}

\begin{proof}[{\bf Proof of Theorem \ref{thm1.4}}]

Consider $\mathcal{L}=\mathcal{O}(D)$ with $D=D_1+\cdots+D_q$. Choose a positive integer $N$ and let $\widehat{N}=N\widetilde{N}/dq$. Note that
$$
H^0(X,\mathcal{L}^{\widehat{N}})=H^0(X,\mathcal{L}^{\frac{N\widetilde{N}}{dq}})\supset \phi^*H^0(\phi(X),\mathcal{O}_{\phi(X)}(N)).
$$
By (19) of \cite{10-2}, for $N$ big enough, we have
$$
H^0(\phi(X),\mathcal{O}_{\phi(X)}(N))\cong\frac{\mathbb{C}[x_0,\ldots,x_M]_N}{I(\phi(X))_N},
$$
where $I(\phi(X))$ is the ideal of $\mathbb{C}[x_0,\ldots,x_M]$ with respect to $\phi(X)$ and $I(\phi(X))_N=I(\phi(X))\cap \mathbb{C}[x_0,\ldots,x_M]_N$.

Let $V_N=\mathbb{C}[x_0,\ldots,x_M]_N$ and $\widehat{V_N}=\frac{\mathbb{C}[x_0,\ldots,x_M]_N}{I(\phi(X))_N}$. For $g\in V_N$, denote by $[g]$ the projection of $g$ in $\widehat{V_N}$. Let $H_{\phi(X)}(N)$ be the Hilbert function of $\phi(X)$. We have
\begin{eqnarray}
\dim \widehat{V_N}=H_{\phi(X)}(N)=\deg\phi(X)\frac{N^n}{n!}+O(N^{n-1})\label{eq3-15}
\end{eqnarray}
for $N$ big enough.

Suppose first that $X$ is normal. Given $z\in \mathbb{C}$, we arrange so that
$$
\lambda_{D_{1,z}}(f(z))\ge \lambda_{D_{2,z}}(f(z))\ge\cdots\ge \lambda_{D_{{\kappa},z}}(f(z))\ge\cdots\ge\lambda_{D_{m,z}}(f(z))\ge\cdots\ge\lambda_{D_{q,z}}(f(z)),
$$
then, by the argument of (\ref{eq3-1}) and (\ref{eqN-1-a}),
\begin{eqnarray}
\frac{\widetilde{N}}{d}\sum_{j=1}^q\lambda_{D_j}(f(z))\le \sum_{j=1}^m\lambda_{H_{j,z}}(\phi\circ f(z))+O(1).\label{eq3-16}
\end{eqnarray}

By the assumption that $H_1,\ldots,H_q$ are in $m$-subgeneral position with index ${\kappa}$ on $\phi(X)$, we have ${\rm codim}\bigcap\limits_{j=1}^{\kappa}H_{j,z}\cap\phi(X)={\kappa}$. We may take hyperplanes $H'_{{\kappa}+1,z},\ldots,H'_{n,z}$ such that
$$
H_{1,z},\ldots,H_{{\kappa},z},H'_{{\kappa}+1,z},\ldots,H'_{n,z}
$$
are in general position on $\phi(X)$.

Set $$\{r_1,\ldots,r_n\}:=\{L_{1,z},\ldots,L_{{\kappa},z},L'_{{\kappa}+1,z},\ldots,L'_{n,z}\},$$ where $L_{1,z},\ldots,L_{{\kappa},z},L'_{{\kappa}+1,z},\ldots,L'_{n,z}$ are the defining linear forms of above hyperplanes. We now introduce a filtration of $\widehat{V_{N}}$ with respect to $\{r_1,\ldots,r_n\}$, which is a generalization of Corvaja-Zannier's filtration (in \cite{10,1}) given by Cherry, Dethloff and Tran \cite{10-1}. We remark that, in \cite{10-1}, Cherry, Dethloff and Tran constructed such filtration to deal with the moving targets problem. We will focus on the fundamental case of fixed targets. We also make improvements by using
joint filtration method which is helpful to achieve the sharp bound.

Arrange, by the lexicographic order, the $n$-tuples ${\bf i}=(i_1,\ldots,i_n)$ of
non-negative integers and set $\|{\bf i}\|=\sum_ji_j$.

\begin{defi}
	(i) For each ${\bf i}\in \mathbb{Z}_{\ge 0}^n$ and non-negative integer $N$ with $N\ge \|{\bf i}\|$, denote by $I_N^{\bf i}$ the subspace of $\mathbb{C}[x_0,\ldots,x_M]_{N-\|{\bf i}\|}$ consisting of all $r\in\mathbb{C}[x_0,\ldots,x_M]_{N-\|{\bf i}\|}$ such that
$$
r_1^{i_1}\cdots r_n^{i_n}r-\sum_{{\bf e}=(e_1,\ldots,e_n)>{\bf i}}r_1^{e_1}\cdots r_n^{e_n}r_{\bf e}\in I(\phi(X))_N
$$
(or
$[r_1^{i_1}\cdots r_n^{i_n}r]=[\sum_{{\bf e}=(e_1,\ldots,e_n)>{\bf i}}r_1^{e_1}\cdots r_n^{e_n}r_{\bf e}]$ on $\widehat{V_{N}}$)
for some $r_{\bf e}\in \mathbb{C}[x_0,\ldots,x_M]_{N-\|{\bf e}\|}$.

(ii) Denote by $I^{\bf i}$ the homogeneous ideal in $\mathbb{C}[x_0,\ldots,x_M]$ generated by $\bigcup_{N\ge\|{\bf i}\|}I^{\bf i}_N$.
\end{defi}

\begin{remark}\label{rkN3.3}
From this definition, we have the following properties.

(i) $(I(\phi(X)),r_1,\ldots,r_n)_{N-\|{\bf i}\|}\subset I^{\bf i}_N\subset\mathbb{C}[x_0,\ldots,x_M]_{N-\|{\bf i}\|}$, where $(I(\phi(X)),r_1,\ldots,r_n)$ is the ideal in $\mathbb{C}[x_0,\ldots,x_M]$ generated by $I(\phi(X))\cup\{r_1,\ldots,r_n\}$.

(ii) $I^{\bf i}\cap\mathbb{C}[x_0,\ldots,x_M]_{N-\|{\bf i}\|}=I^{\bf i}_N$.


(iii) If ${\bf i}_1-{\bf i}_2:=(i_{1,1}-i_{2,1},\ldots,i_{1,n}-i_{2,n})\in \mathbb{Z}_{\ge 0}^n$, then $I_{N}^{{\bf i}_2}\subset I_{N+\|{\bf i}_1\|-\|{\bf i}_2\|}^{{\bf i}_1}$. Hence $I^{{\bf i}_2}\subset I^{{\bf i}_1}$.
\end{remark}

\begin{lemma}\label{lemN3.4}
	$\{I^{\bf i}|{\bf i}\in \mathbb{Z}_{\ge 0}^n\}$ is a finite set.
\end{lemma}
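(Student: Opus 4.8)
The plan is to forget almost all of the geometry and deduce the statement from a purely combinatorial fact about Noetherian rings, using only Remark~\ref{rkN3.3}(iii). That remark says precisely that ${\bf i}\mapsto I^{\bf i}$ is order-preserving from $(\mathbb{Z}_{\ge 0}^n,\le)$, with $\le$ the componentwise order, to the set of homogeneous ideals of $\mathbb{C}[x_0,\ldots,x_M]$ ordered by inclusion. I will prove the following general assertion: \emph{any} order-preserving family of ideals of a Noetherian ring indexed by $\mathbb{Z}_{\ge 0}^n$ takes only finitely many values; applying it with $n=\dim X$ yields the lemma. Note that the general position hypothesis on $r_1,\ldots,r_n$ --- and the resulting $0$-dimensionality of $\phi(X)\cap\bigcap_j\{r_j=0\}$ --- is irrelevant to this statement; it enters only later, when the dimensions of the graded pieces of the filtration are estimated.

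I would argue by induction on $n$, the case $n=0$ being trivial. For the inductive step, note first that $(\mathbb{Z}_{\ge 0}^n,\le)$ is a directed poset, so since the family is increasing, $I^{\infty}:=\bigcup_{{\bf i}}I^{\bf i}$ is again a homogeneous ideal. By the Hilbert basis theorem it is finitely generated; choosing generators, each lying in some $I^{{\bf i}'}$, and letting ${\bf i}_0=(a_1,\ldots,a_n)$ be the componentwise maximum of those finitely many multi-indices, one gets $I^{{\bf i}_0}=I^{\infty}$, hence $I^{\bf i}=I^{\infty}$ for every ${\bf i}\ge{\bf i}_0$. So the family is constant on $\{{\bf i}:{\bf i}\ge{\bf i}_0\}$, whose complement in $\mathbb{Z}_{\ge 0}^n$ is the finite union $\bigcup_{k=1}^n\bigcup_{0\le c<a_k}\{{\bf i}:i_k=c\}$ of coordinate slices, each a copy of $\mathbb{Z}_{\ge 0}^{n-1}$ carrying an order-preserving family. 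By the induction hypothesis each slice contributes finitely many ideals, so $\{I^{\bf i}:{\bf i}\in\mathbb{Z}_{\ge 0}^n\}$ is finite.

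The one delicate point is the interplay, in that step, between the two finiteness principles available for $\mathbb{Z}_{\ge 0}^n$: Dickson's lemma rules out infinite antichains and infinite strictly descending chains, so the induction over coordinate slices is well behaved, but $\mathbb{Z}_{\ge 0}^n$ does contain infinite strictly ascending chains, and hence an increasing family indexed by it need not stabilize when one marches out to infinity in a non-uniform way. The Hilbert basis theorem is exactly what supplies the missing uniformity: it pins the family down to one finitely generated ideal on a cofinal up-set, leaving only lower-dimensional strata to handle by induction. Equivalently, one could derive a contradiction directly: were the values not eventually constant on the upper regions, Dickson's lemma would produce a strictly increasing infinite chain of ideals, contradicting the ascending chain condition; but the formulation via $I^{\infty}$ is shorter.
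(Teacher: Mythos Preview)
Your proof is correct and rests on exactly the same two ingredients as the paper's --- Remark~\ref{rkN3.3}(iii) and the Noetherianity of $\mathbb{C}[x_0,\ldots,x_M]$ --- but the paper organizes them differently. The paper argues by contradiction: if $\{I^{\bf i}\}$ were infinite, one could extract a sequence ${\bf i}_1<{\bf i}_2<\cdots$ (componentwise) with the $I^{{\bf i}_t}$ pairwise distinct, and then monotonicity gives a strictly ascending chain of ideals, contradicting the ACC. This is precisely the alternative you sketch in your final paragraph; the paper leaves the extraction step (which is essentially Dickson's lemma, i.e.\ that $\mathbb{Z}_{\ge 0}^n$ is a well-partial-order) implicit. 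Your inductive argument via the directed union $I^{\infty}$ and coordinate slices is more explicit and self-contained --- it never appeals to Dickson's lemma --- at the cost of a little more length; the paper's two-line contradiction is shorter but asks the reader to supply the well-partial-order fact.
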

\begin{proof}
Suppose that $\sharp\{I^{\bf i}|{\bf i}\in \mathbb{Z}_{\ge 0}^n\}=\infty$. We can construct a sequence $\{{\bf i}_t\}_{t=1}^{\infty}$ such that ${\bf i}_{t+1}-{\bf i}_{t}\in \mathbb{Z}_{\ge 0}^n$ and $\{I^{{\bf i}_t}\}_{t=1}^{\infty}$ consisting of pairwise different ideals. By (iii) of Remark \ref{rkN3.3},
$$
I^{{\bf i}_{1}}\subset I^{{\bf i}_{2}}\subset \cdots \subset I^{{\bf i}_{t}}\subset I^{{\bf i}_{t+1}}\subset\cdots,
$$
which contradicts the fact that $\mathbb{C}[x_0,\ldots,x_M]$ is a Noetherian ring.
\end{proof}

Denote
\begin{eqnarray}
\Delta_N^{\bf i}:=\dim\frac{\mathbb{C}[x_0,\ldots,x_M]_{N-\|{\bf i}\|}}{I^{\bf i}_{N}}.\label{eqN-D-1}
\end{eqnarray}

\begin{lemma}\label{lemN3.5}
	(i) There exists a positive integer $N_0$ such that, for each ${\bf i}\in \mathbb{Z}^n_{\ge 0}$, $\Delta_N^{\bf i}$ is independent of $N$ for all $N$ satisfying $N-\|{\bf i}\|>N_0$.

(ii) For all $N$ and ${\bf i}$ with $N-\|{\bf i}\|\ge 0$, $\Delta_N^{\bf i}$ is bounded.
\end{lemma}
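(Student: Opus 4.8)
The plan is to recognize $\Delta_N^{\bf i}$ as a value of a Hilbert function and then to exploit the fact that all the ideals $I^{\bf i}$ contain one fixed homogeneous ideal whose zero locus in $\mathbb{P}^M(\mathbb{C})$ is finite. First I would use Remark~\ref{rkN3.3}(ii), which gives $I^{\bf i}_N=I^{\bf i}\cap\mathbb{C}[x_0,\ldots,x_M]_{N-\|{\bf i}\|}$; hence
$$
\Delta_N^{\bf i}=\dim\bigl(\mathbb{C}[x_0,\ldots,x_M]/I^{\bf i}\bigr)_{N-\|{\bf i}\|}=H_{R^{\bf i}}(N-\|{\bf i}\|),
$$
the Hilbert function of the standard graded algebra $R^{\bf i}:=\mathbb{C}[x_0,\ldots,x_M]/I^{\bf i}$ evaluated in degree $N-\|{\bf i}\|$. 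In particular $\Delta_N^{\bf i}$ depends on the pair $(N,{\bf i})$ only through the ideal $I^{\bf i}$ and the nonnegative integer $N-\|{\bf i}\|$.

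Next I would introduce the fixed ideal $J:=(I(\phi(X)),r_1,\ldots,r_n)$. By Remark~\ref{rkN3.3}(i), $J\subset I^{\bf i}$ for every~${\bf i}$, so each $R^{\bf i}$ is a graded quotient of $\mathbb{C}[x_0,\ldots,x_M]/J$ and therefore $H_{R^{\bf i}}(t)\le H_{\mathbb{C}[x_0,\ldots,x_M]/J}(t)$ for all $t\ge 0$. The geometric input is that $r_1,\ldots,r_n$ are the linear forms defining $n$ hyperplanes in general position on the $n$-dimensional variety $\phi(X)$, so the closed subscheme $V(J)=\phi(X)\cap\{r_1=0\}\cap\cdots\cap\{r_n=0\}$ of $\mathbb{P}^M(\mathbb{C})$ has dimension $\le 0$. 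Consequently the Hilbert polynomial of $\mathbb{C}[x_0,\ldots,x_M]/J$ has degree $\le 0$, i.e., it is a constant, and the Hilbert function of $\mathbb{C}[x_0,\ldots,x_M]/J$ coincides with that constant in all large degrees; in particular it is bounded on $\mathbb{Z}_{\ge 0}$. This already proves (ii), since $\Delta_N^{\bf i}=H_{R^{\bf i}}(N-\|{\bf i}\|)\le\sup_{t\ge 0}H_{\mathbb{C}[x_0,\ldots,x_M]/J}(t)<\infty$ whenever $N-\|{\bf i}\|\ge 0$.

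For (i) I would invoke Lemma~\ref{lemN3.4}: among the $I^{\bf i}$ there are only finitely many distinct ideals. For each such ideal $I$ one again has $J\subset I$, so the Hilbert polynomial of $\mathbb{C}[x_0,\ldots,x_M]/I$ is a constant, attained by its Hilbert function for all degrees beyond some threshold $N_I$. Setting $N_0:=\max_I N_I$ over these finitely many ideals, for every ${\bf i}$ and every $N$ with $N-\|{\bf i}\|>N_0$ the quantity $\Delta_N^{\bf i}=H_{\mathbb{C}[x_0,\ldots,x_M]/I^{\bf i}}(N-\|{\bf i}\|)$ equals the constant value of the corresponding Hilbert polynomial and is thus independent of $N$. (This also reproves (ii): $\Delta_N^{\bf i}$ then takes only finitely many possible values.)

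The step needing the most care is the dimension estimate $\dim V(J)\le 0$: this is exactly where the general-position hypothesis on $r_1,\ldots,r_n$ and the equality $\dim\phi(X)=n$ enter, and it is what forces all the Hilbert polynomials in sight to be constants. The remaining ingredients---the classical fact that a standard graded algebra whose $\operatorname{Proj}$ has dimension at most $0$ has constant Hilbert polynomial, the eventual stabilization of Hilbert functions, and the bookkeeping via Remark~\ref{rkN3.3} and Lemma~\ref{lemN3.4}---are routine.
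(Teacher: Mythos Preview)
Your proposal is correct and follows essentially the same route as the paper: both identify $\Delta_N^{\bf i}$ with a Hilbert function value, use Remark~\ref{rkN3.3}(i) to get the inclusion $(I(\phi(X)),r_1,\ldots,r_n)\subset I^{\bf i}$, invoke general position to force the associated projective scheme to have dimension $\le 0$ so that the Hilbert polynomial is constant, and then appeal to Lemma~\ref{lemN3.4} to take a finite maximum of thresholds. The only cosmetic difference is that for (ii) the paper handles the finitely many small degrees $N-\|{\bf i}\|\le N_0$ by the crude binomial bound $\binom{N-\|{\bf i}\|+M}{M}$, whereas you bound everything uniformly by $\sup_{t\ge 0} H_{\mathbb{C}[x_0,\ldots,x_M]/J}(t)$; your version is slightly cleaner but the content is the same.
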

\begin{proof}
%

For each ${\bf i}\in \mathbb{Z}_{\ge 0}^{n}$, $\frac{\mathbb{C}[x_0,\ldots,x_M]}{I^{\bf i}}$ is a graded $\mathbb{C}[x_0,\ldots,x_M]$-module. By (i) of Remark \ref{rkN3.3}, we have $(I(\phi(X)),r_1,\ldots,r_n)\subset{\rm Ann}\left(\frac{\mathbb{C}[x_0,\ldots,x_M]}{I^{\bf i}}\right)$. Since $r_1,\ldots,r_n$
are located in general position on $\phi(X)$, $\dim Z\left({\rm Ann}\left(\frac{\mathbb{C}[x_0,\ldots,x_M]}{I^{\bf i}}\right)\right)\le 0$ where $Z(\cdot)$ denotes the zero set in $\mathbb{P}^M(\mathbb{C})$ of a homogeneous ideal. By the Hilbert-Serre Theorem (see \cite{Hart}), there exist non-negative integers $N(I^{\bf i})$ and $\Delta(I^{\bf i})$ such that $\Delta_N^{\bf i}=\Delta(I^{\bf i})$ when $N-\|{\bf i}\|>N(I^{\bf i})$.  Set $N_0=\max\{N(I^{\bf i})|{\bf i}\in \mathbb{Z}_{\ge 0}^n\}$ and $\widetilde{\Delta} = \max\{\Delta(I^{\bf i})|{\bf i}\in \mathbb{Z}_{\ge 0}^n\}$ ($\{I^{\bf i}|{\bf i}\in \mathbb{Z}_{\ge 0}^n\}$ is a finite set by Lemma \ref{lemN3.4}), then we have $\Delta^{\bf i}_N=\Delta(I^{\bf i})\leq \widetilde{\Delta}$ when $N-\|{\bf i}\|>N_0$ which gives the conclusion (i). To see (ii), it suffices to combine the above inequality with $\Delta^{\bf i}_N\leq \dim\mathbb{C}[x_0,\ldots,x_M]_{N-\|{\bf i}\|}={N-\|{\bf i}\|+M\choose M}$.
\end{proof}

\begin{remark}\label{rkN3.6}For simplicity, we will rewrite the integer $\Delta(I^{\bf i})$ in the proof of Lemma \ref{lemN3.5} as $\Delta^{\bf i}$ for each ${\bf i}\in\mathbb{Z}_{\ge 0}^n$. Set $\Delta_0:=\min_{{\bf i}\in \mathbb{Z}_{\ge 0}^n}\Delta^{\bf i},$ then
$\Delta_0=\Delta^{{\bf i}_0}$ for some ${\bf i}_0\in \mathbb{Z}_{\ge 0}^n$. By (iii) of Remark \ref{rkN3.3}, if ${\bf i}-{\bf i}_0\in \mathbb{Z}_{\ge 0}^n$, then $\Delta^{\bf i}\le \Delta^{{\bf i}_0}$.
\end{remark}

Now, for $N$ big enough with $N>N_0$ and $N-\|{\bf i}_0\|>0$, we construct the following filtration of $\widehat{V_N}$ with respect to $\{r_1,\ldots,r_n\}$.

Denote by $\tau_N$ the set of ${\bf i}\in \mathbb{Z}_{\ge 0}^n$ with $N-\|{\bf i}\|\ge 0$, arranged by
the lexicographic order.

Define
the spaces $W_{{\bf i}}=W_{N,{\bf i}}$ by
$$
W_{{\bf i}}=\sum_{{\bf e}\ge{\bf i}}r_1^{e_1}\cdots r_n^{e_n}V_{N-\|{\bf e}\|}.
$$
Plainly $W_{(0,\ldots,0)}=V_N$ and $W_{{\bf i}}\supset W_{{\bf i}'}$ if
${\bf i}'> {\bf i}$, so the $W_{{\bf i}}$ is a filtration of $V_N$. Set $\widehat{W_{{\bf i}}}=\{[g]|g\in W_{{\bf i}}\}$. Hence, the $\widehat{W_{{\bf i}}}$ is a filtration of $\widehat{V_N}$.

\begin{lemma}\label{lemN3.7}
	Suppose that ${\bf i}'$ follows ${\bf i}$ in the lexicographic order, then
$$
\frac{\widehat{W_{\bf i}}}{\widehat{W_{{\bf i}'}}}\cong\frac{\mathbb{C}[x_0,\ldots,x_M]_{N-\|{\bf i}\|}}{I^{\bf i}_{N}}.
$$
\end{lemma}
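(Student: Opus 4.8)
The plan is to produce one explicit $\mathbb{C}$-linear map from $\mathbb{C}[x_0,\ldots,x_M]_{N-\|{\bf i}\|}$ to $\widehat{W_{\bf i}}/\widehat{W_{{\bf i}'}}$, show it is surjective, compute its kernel, and then invoke the first isomorphism theorem. Since each $r_j$ is a linear form, $r_1^{i_1}\cdots r_n^{i_n}$ has degree $\|{\bf i}\|$, so for $r\in\mathbb{C}[x_0,\ldots,x_M]_{N-\|{\bf i}\|}$ the product $r_1^{i_1}\cdots r_n^{i_n}r$ is a degree-$N$ form lying in $r_1^{i_1}\cdots r_n^{i_n}V_{N-\|{\bf i}\|}\subset W_{\bf i}$. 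Composing multiplication by $r_1^{i_1}\cdots r_n^{i_n}$ with the projections $V_N\twoheadrightarrow\widehat{V_N}$ and $\widehat{W_{\bf i}}\twoheadrightarrow\widehat{W_{\bf i}}/\widehat{W_{{\bf i}'}}$ yields a well-defined $\mathbb{C}$-linear map
\[
\psi:\ \mathbb{C}[x_0,\ldots,x_M]_{N-\|{\bf i}\|}\longrightarrow\widehat{W_{\bf i}}/\widehat{W_{{\bf i}'}},\qquad \psi(r)=\big[r_1^{i_1}\cdots r_n^{i_n}r\big]\bmod\widehat{W_{{\bf i}'}}.
\]

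For surjectivity I would use that ${\bf i}'$ is the \emph{immediate} successor of ${\bf i}$ in the lexicographic order: every multi-index ${\bf e}$ with ${\bf i}<{\bf e}<{\bf i}'$ satisfies $\|{\bf e}\|>N$, hence $V_{N-\|{\bf e}\|}=0$ and $\sum_{{\bf e}>{\bf i}}r_1^{e_1}\cdots r_n^{e_n}V_{N-\|{\bf e}\|}=W_{{\bf i}'}$. Splitting off the summand ${\bf e}={\bf i}$ in the definition of $W_{\bf i}$ therefore gives $W_{\bf i}=r_1^{i_1}\cdots r_n^{i_n}V_{N-\|{\bf i}\|}+W_{{\bf i}'}$; passing to $\widehat{V_N}$ and then modding out $\widehat{W_{{\bf i}'}}$ shows that the classes $\psi(r)$ span $\widehat{W_{\bf i}}/\widehat{W_{{\bf i}'}}$, so $\psi$ is onto. (When ${\bf i}$ is the largest element of $\tau_N$ one has $\widehat{W_{{\bf i}'}}=0$, and the same reasoning applies with the empty correction.)

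It remains to identify $\ker\psi$ with $I^{\bf i}_N$, and this is precisely where the defining property of $I^{\bf i}_N$ enters. Indeed, $\psi(r)=0$ is equivalent to $[r_1^{i_1}\cdots r_n^{i_n}r]\in\widehat{W_{{\bf i}'}}$, i.e.\ to $r_1^{i_1}\cdots r_n^{i_n}r\in W_{{\bf i}'}+I(\phi(X))_N$; writing a general element of $W_{{\bf i}'}$ as $\sum_{{\bf e}>{\bf i}}r_1^{e_1}\cdots r_n^{e_n}r_{\bf e}$ with $r_{\bf e}\in\mathbb{C}[x_0,\ldots,x_M]_{N-\|{\bf e}\|}$ (the terms with $\|{\bf e}\|>N$ being $0$), this says exactly that
\[
r_1^{i_1}\cdots r_n^{i_n}r-\sum_{{\bf e}>{\bf i}}r_1^{e_1}\cdots r_n^{e_n}r_{\bf e}\in I(\phi(X))_N
\]
for some such $r_{\bf e}$, which is verbatim the defining condition of $r\in I^{\bf i}_N$. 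Hence $\psi$ descends to the asserted isomorphism $\mathbb{C}[x_0,\ldots,x_M]_{N-\|{\bf i}\|}/I^{\bf i}_N\cong\widehat{W_{\bf i}}/\widehat{W_{{\bf i}'}}$.

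I do not expect a genuine obstacle here: the proof is essentially bookkeeping. The two points demanding care are the identity $\sum_{{\bf e}>{\bf i}}r_1^{e_1}\cdots r_n^{e_n}V_{N-\|{\bf e}\|}=W_{{\bf i}'}$ (which relies on ${\bf i}'$ being the immediate lexicographic successor of ${\bf i}$, together with the vanishing of $V_{N-\|{\bf e}\|}$ for $\|{\bf e}\|>N$), and making the kernel computation line up exactly with the definition of $I^{\bf i}_N$, including the degenerate cases where some of the spaces $V_{N-\|{\bf e}\|}$ are zero.
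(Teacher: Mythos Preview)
Your proposal is correct and follows essentially the same approach as the paper: define the multiplication map $r\mapsto[r_1^{i_1}\cdots r_n^{i_n}r]\bmod\widehat{W_{{\bf i}'}}$, note surjectivity, and verify that the kernel is exactly $I^{\bf i}_N$ by unwinding the definition. The only difference is that you spell out the surjectivity step via $W_{\bf i}=r_1^{i_1}\cdots r_n^{i_n}V_{N-\|{\bf i}\|}+W_{{\bf i}'}$, whereas the paper simply declares it obvious.
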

\begin{proof}
Define a vector space homomorphism
$$
\varphi:\mathbb{C}[x_0,\ldots,x_M]_{N-\|{\bf i}\|}\rightarrow \frac{\widehat{W_{\bf i}}}{\widehat{W_{{\bf i}'}}},
$$
which maps $r\in \mathbb{C}[x_0,\ldots,x_M]_{N-\|{\bf i}\|}$ to $[r_1^{i_1}\cdots r_n^{i_n}r](\in \widehat{W_{\bf i}})$ modulo $\widehat{W_{{\bf i}'}}$. Obviously, it is surjective.

Let $\ker \varphi$ be the kernel of $\varphi$. Suppose $r\in \ker \varphi$. This means
$$
[r_1^{i_1}\cdots r_n^{i_n}r]\in \widehat{W_{{\bf i}'}}
$$
(or
$[r_1^{i_1}\cdots r_n^{i_n}r]=[\sum_{{\bf e}=(e_1,\ldots,e_n)>{\bf i}}r_1^{e_1}\cdots r_n^{e_n}r_{\bf e}]$)
for some $r_{\bf e}\in V_{N-\|{\bf e}\|}$. i.e., $r\in I_N^{\bf i}$. Hence, $\ker \varphi\subset I_N^{\bf i}$. On the other hand, if $r\in I_N^{\bf i}$, then, there exist $r_{\bf e}\in V_{N-\|{\bf e}\|}$ such that
$$
[r_1^{i_1}\cdots r_n^{i_n}r]=[\sum_{{\bf e}=(e_1,\ldots,e_n)>{\bf i}}r_1^{e_1}\cdots r_n^{e_n}r_{\bf e}]\in \widehat{W_{\bf i}}.
$$
i.e., $r\in \ker \varphi$. Hence, $\ker \varphi= I_N^{\bf i}$, which concludes the proof of Lemma \ref{lemN3.7}.
\end{proof}

Combining with (\ref{eqN-D-1}), we have
$$
\dim\frac{\widehat{W_{\bf i}}}{\widehat{W_{{\bf i}'}}}=\Delta^{{\bf i}}_N.
$$

Set
$$
\tau^0_N=\{{\bf i}\in \tau_N|N-\|{\bf i}\|>N_0\ \mbox{and}\ {\bf i}-{\bf i}_0\in\mathbb{Z}_{\ge 0}^n\}.
$$

\begin{lemma}\label{lemN3.8}
	(i) $\Delta_0=\Delta^{\bf i}$ for all ${\bf i}\in \tau^0_N$.

(ii) $\sharp\tau^0_N=\frac{N^{n}}{n!}+O(N^{n-1})$.

(iii) $\Delta^{\bf i}_N=\deg \phi(X)$ for all ${\bf i}\in \tau^0_N$.
\end{lemma}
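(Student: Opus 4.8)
The plan is to prove the three items in the stated order, obtaining (iii) from (i), (ii), the filtration identity $\dim\widehat{V_N}=\sum_{{\bf i}\in\tau_N}\Delta^{\bf i}_N$ supplied by Lemma \ref{lemN3.7}, and the Hilbert‑function asymptotics (\ref{eq3-15}). For (i): every ${\bf i}\in\tau^0_N$ satisfies ${\bf i}-{\bf i}_0\in\mathbb{Z}_{\ge 0}^n$, so Remark \ref{rkN3.6} gives $\Delta^{\bf i}\le\Delta^{{\bf i}_0}=\Delta_0$, while $\Delta_0=\min_{{\bf j}}\Delta^{\bf j}\le\Delta^{\bf i}$; hence $\Delta^{\bf i}=\Delta_0$, and since ${\bf i}\in\tau^0_N$ forces $N-\|{\bf i}\|>N_0$, Lemma \ref{lemN3.5}(i) upgrades this to $\Delta^{\bf i}_N=\Delta^{\bf i}=\Delta_0$. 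For (ii): writing ${\bf i}={\bf i}_0+{\bf j}$ with ${\bf j}\in\mathbb{Z}_{\ge 0}^n$, the condition ${\bf i}\in\tau^0_N$ becomes $\|{\bf j}\|\le N-\|{\bf i}_0\|-N_0-1$, so $\sharp\tau^0_N=\binom{N-\|{\bf i}_0\|-N_0-1+n}{n}$, which equals $\frac{N^n}{n!}+O(N^{n-1})$ because $\|{\bf i}_0\|$ and $N_0$ are fixed constants.

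For (iii), first estimate the number of discarded indices. The set $\tau_N\setminus\tau^0_N$ is contained in the union of the ``shell'' $\{{\bf i}\in\tau_N:\ N-\|{\bf i}\|\le N_0\}$ and, over the finitely many pairs $(k,v)$ with $1\le k\le n$ and $0\le v< i_{0,k}$, the slices $\{{\bf i}\in\tau_N:\ i_k=v\}$; each of these sets has $O(N^{n-1})$ elements, so $\sharp(\tau_N\setminus\tau^0_N)=O(N^{n-1})$. Since every $\Delta^{\bf i}_N$ is bounded by a constant independent of $N$ and ${\bf i}$ by Lemma \ref{lemN3.5}(ii), Lemma \ref{lemN3.7} gives
\begin{align*}
H_{\phi(X)}(N)=\dim\widehat{V_N}&=\sum_{{\bf i}\in\tau_N}\Delta^{\bf i}_N=\sum_{{\bf i}\in\tau^0_N}\Delta^{\bf i}_N+O(N^{n-1})\\
&=\Delta_0\cdot\sharp\tau^0_N+O(N^{n-1})=\Delta_0\frac{N^n}{n!}+O(N^{n-1}),
\end{align*}
where the third equality uses Lemma \ref{lemN3.5}(ii) together with the cardinality bound just proved, the fourth uses item (i), and the fifth uses item (ii). Comparing leading coefficients with (\ref{eq3-15}) forces $\Delta_0=\deg\phi(X)$, and then, as recorded in the proof of (i), $\Delta^{\bf i}_N=\Delta_0=\deg\phi(X)$ for every ${\bf i}\in\tau^0_N$, which is (iii).

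The one delicate point is the error‑term bookkeeping in (iii): one must check that \emph{both} discarded families --- the boundary shell $N-\|{\bf i}\|\le N_0$ and the indices that fail to dominate ${\bf i}_0$ --- contribute only $O(N^{n-1})$ to $\sum_{{\bf i}\in\tau_N}\Delta^{\bf i}_N$, which is precisely why the uniform boundedness of Lemma \ref{lemN3.5}(ii), rather than merely the eventual‑constancy statement (i), is indispensable. Everything else reduces to elementary lattice‑point counting.
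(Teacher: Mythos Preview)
Your proof is correct and follows essentially the same approach as the paper's. The only minor variation is in (ii): you compute $\sharp\tau^0_N$ directly via the bijection ${\bf i}\mapsto{\bf i}-{\bf i}_0$, obtaining the closed form $\binom{N-\|{\bf i}_0\|-N_0-1+n}{n}$, whereas the paper obtains the same asymptotic by subtracting the shell $\{N-\|{\bf i}\|\le N_0\}$ and the slices $\{i_l<i_{0,l}\}$ from $\sharp\tau_N=\binom{N+n}{n}$; both arguments are elementary and yield the same $\frac{N^n}{n!}+O(N^{n-1})$.
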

\begin{proof}
(i) By the definition of $\tau^0_N$, we have $\Delta^{\bf i}_N=\Delta^{\bf i}$ for ${\bf i}\in \tau^0_N$. On the other hand, $\Delta^{\bf i}\le \Delta^{{\bf i}_0}$ (note that ${\bf i}-{\bf i}_0\in \mathbb{Z}_{\ge 0}^n$ and Remark \ref{rkN3.6}). By the minimality of $\Delta^{{\bf i}_0}$, we have (i).

(ii) We have
$$
\sharp\tau_N=\left(\begin{matrix}N+n\\n\end{matrix}\right)=\frac{N^{n}}{n!}+O(N^{n-1}),\ \ \
\sharp\{{\bf i}\in\tau_N|N-\|{\bf i}\|\le N_0\}=O(N^{n-1})
$$
and
$$
\sharp\{{\bf i}\in\tau_N|{\bf i}-{\bf i}_0=(i_1-i_{0,1},\ldots,i_n-i_{0,n})\ \mbox{with\ some}\ i_l-i_{0,l}<0\}=O(N^{n-1}).
$$
It implies that $\sharp\tau^0_N=\frac{N^{n}}{n!}+O(N^{n-1})$.

(iii) By (ii) of Lemma \ref{lemN3.5}, $\Delta_N^{{\bf i}}$ is bounded by all ${\bf i}$ and $N$. Hence, combining (i), (ii) and (\ref{eq3-15}),
\begin{eqnarray*}
\deg\phi(X)\frac{N^n}{n!}+O(N^{n-1})&=&\sum_{{\bf i}\in \tau_N}\Delta_N^{\bf i}=\Delta_0\cdot \sharp\tau^0_N+\sum_{{\bf i}\in \tau_N\setminus\tau^0_N}\Delta_N^{\bf i}\\
&=&\Delta_0\left(\frac{N^n}{n!}+O(N^{n-1})\right)+O(N^{n-1}).
\end{eqnarray*}
We have $\Delta_0=\deg\phi(X)$.
\end{proof}

We choose a basis $\mathcal{B}=\{s_1,\ldots,s_{H_{\phi(X)}(N)}\}$ of $\widehat{V_N}$ with respect to the above filtration. Let $s$ be an element of the basis, which lies in
$\widehat{W}_{{\bf i}}\setminus \widehat{W}_{{\bf i}'}$, we may write $s=[r_1^{i_1}\cdots
r_n^{i_n}r]$, where $r\in V_{N-\|{\bf i}\|}$. For every $1\le j\le n$, we have
\begin{eqnarray}
\sum_{{\bf i}\in \tau_N}\Delta_N^{\bf i}i_j=\deg\phi(X) \frac{N^{n+1}}{(n+1)!}+O(N^{n}).\label{eq3-17}
\end{eqnarray}
(The proof of (\ref{eq3-17}) is similar to (3.6) in \cite{1}.) Hence, for any prime divisor $E$,
$$
\sum_{i=1}^{H_{\phi(X)}(N)}{\rm ord}_{E}\phi^*(s_i)\ge \left(\deg\phi(X) \frac{N^{n+1}}{(n+1)!}+O(N^{n})\right){\rm ord}_{E}(\phi^*H_{1,z}+\cdots+\phi^*H_{{\kappa},z}),
$$
where $\phi^*(s_i)$ is the divisor on $X$ with respect to $s_i$, i.e.,
$$
\sum_{i=1}^{H_{\phi(X)}(N)}\phi^*(s_i)\ge \left(\deg\phi(X) \frac{N^{n+1}}{(n+1)!}+O(N^{n})\right)\cdot \frac{\widetilde{N}}{d}\cdot (D_{1,z}+\cdots+D_{{\kappa},z}).
$$

We may construct another filtration of $\widehat{V_N}$ by using $H_{{\kappa}+1,z},\ldots,H_{2{\kappa},z}$ if $2{\kappa}\le m$, otherwise $H_{{\kappa}+1,z},\ldots,H_{m,z}$.

By Lemma \ref{lem3.1}, we can construct a basis $\mathcal{B}=\{s_1,\ldots,s_{H_{\phi(X)}(N)}\}$ with respect to the above two filtrations such that
\begin{eqnarray}
\sum_{i=1}^{H_{\phi(X)}(N)}\phi^*(s_i)\ge \left(\deg\phi(X) \frac{N^{n+1}}{(n+1)!}+O(N^{n})\right)\cdot \frac{\widetilde{N}}{d}\cdot(D_{1,z}+\cdots+D_{{\kappa},z})\label{eq3-18}
\end{eqnarray}
and
\begin{eqnarray}
\sum_{i=1}^{H_{\phi(X)}(N)}\phi^*(s_i)\ge \left(\deg\phi(X) \frac{N^{n+1}}{(n+1)!}+O(N^{n})\right)\cdot \frac{\widetilde{N}}{d}\cdot(D_{{\kappa}+1,z}+\cdots+D_{2{\kappa},z}).\label{eq3-19}
\end{eqnarray}
(If $2{\kappa}>m$, $D_{{\kappa}+1,z}+\cdots+D_{2{\kappa},z}$ in (\ref{eq3-19}) should be replaced by $D_{{\kappa}+1,z}+\cdots+D_{m,z}$.)


We remark that if $D_1,\ldots,D_q$ are in $m$-subgeneral position with index ${\kappa}>1$, then $D_i$ and $D_j$ have no common components for $i\neq j$, combining \eqref{eq3-18} and \eqref{eq3-19}, we have
\begin{equation}
\sum_{i=1}^{H_{\phi(X)}(N)}\phi^*(s_i)\geq
 \left(\deg\phi(X) \frac{N^{n+1}}{(n+1)!}+O(N^{n})\right)\cdot \frac{\widetilde{N}}{d}\cdot(D_{1,z}+\cdots+D_{2{\kappa},z}).\label{eq3-20}
\end{equation}
(If $2{\kappa}>m$, $D_{1,z}+\cdots+D_{2{\kappa},z}$ in (\ref{eq3-20}) should be replaced by $D_{1,z}+\cdots+D_{m,z}$.) By (\ref{eq3-16}) and (\ref{eq3-20}),
\begin{eqnarray}
\frac{\widetilde{N}}{d}\sum_{j=1}^q\lambda_{D_j}(f(z))&\le& \max\left\{\frac{m}{2{\kappa}},1\right\}\frac{1}{\deg\phi(X) \frac{N^{n+1}}{(n+1)!}+O(N^{n})}\sum_{i=1}^{H_{\phi(X)}(N)}\lambda_{\phi^*(s_i)}(f(z))\nonumber\\
&&+O(1).\label{eq3-21}
\end{eqnarray}
As there are only finitely many choices of
$\{r_1,\ldots,r_n\}$, we have a finite
collection of bases $\mathcal{B}$. By Theorem D with $\varepsilon=\frac{1}{2}$, (\ref{eq3-21}) implies that
\begin{eqnarray}
&&\|\widetilde{N}\sum_{j=1}^q\frac{1}{d}m_f(r,D_j)\nonumber\\
&\le& \max\left\{\frac{m}{2{\kappa}},1\right\}\frac{1}{\deg\phi(X) \frac{N^{n+1}}{(n+1)!}+O(N^{n})}\int_0^{2\pi}\max_{\mathcal{B}}\sum_{s_{j}\in \mathcal{B}}\lambda_{\phi^*(s_{j})}(f(re^{i\theta}))\frac{d\theta}{2\pi}+O(1)\nonumber\\
&\le&\max\left\{\frac{m}{2{\kappa}},1\right\}\frac{1}{\deg\phi(X) \frac{N^{n+1}}{(n+1)!}+O(N^{n})}(H_{\phi(X)}(N)+1/2)T_{f,\mathcal{L}^{\widehat{N}}}(r)+O(1),\label{eq3-22}
\end{eqnarray}
where $\max_{\mathcal{B}}$ is taken over all bases constructed above.

Note that $T_{f,\mathcal{L}^{\widehat{N}}}(r)=NT_{f,\mathcal{O}(\widetilde{N}A)}(r)+O(1)$. From (\ref{eq3-15}) and (\ref{eq3-22}), it follows that for $N$ big enough,
\begin{eqnarray*}
&&\|\widetilde{N}\sum_{j=1}^q\frac{1}{d}m_f(r,D_j)\\
&\le& \max\left\{\frac{m}{2{\kappa}},1\right\}\frac{1}{\deg\phi(X) \frac{N^{n+1}}{(n+1)!}+O(N^{n})}\left(\deg\phi(X) \frac{N^{n}}{n!}+O(N^{n-1})+\frac{1}{2}\right)N\cdot T_{f,\widetilde{N}A}(r)\\
&&+O(1)\\
&\le&\max\left\{\frac{m}{2{\kappa}},1\right\}(n+1+\varepsilon)\widetilde{N}T_{f,A}(r).
\end{eqnarray*}
Hence, (\ref{eqN-D-0}) holds for this case.

If $X$ is not normal, then we consider the normalization $\pi:\widetilde{X}\rightarrow X$ and divisors $\pi^*A$ and $\pi^*D_j$. We have $\pi^*A$ is ample and $\pi^*D_1,\ldots,\pi^*D_q$ are in $m$-subgeneral position with index $\kappa$ on $\widetilde{X}$. Hence
$$
\|\sum_{j=1}^q\frac{1}{d}m_{\widetilde{f}}(r,D_j)\le \left(\max\left\{\frac{m}{2{\kappa}},1\right\}(n+1)+\varepsilon\right)T_{\widetilde{f},\pi^*A}(r),
$$
where $\widetilde{f}:\mathbb{C}\rightarrow \widetilde{X}$ is the lifting of $f$. As $\pi$ is a birational morphism, by the properties of Weil function and characteristic function, we have
(\ref{eqN-D-0}) for the general case.
\end{proof}

%
%

Next, we give a quantitative version of a well-known result that every holomorphic curve $f:\mathbb{C}\rightarrow \mathbb{P}^2\setminus D$ is algebraically degenerate if $D$ has 4 distinct components.

\begin{coro}\label{coro1.6}
	Let $D_j$, $j=1,2,3,4$, be the distinct irreducible hypersurfaces of degree $d_j$ in $\mathbb{P}^2(\mathbb{C})$. Let $f:{\mathbb{C}}\rightarrow \mathbb{P}^2(\mathbb{C})$ be an algebraically non-degenerate holomorphic curve. Then, for every $\varepsilon>0$,
	\begin{eqnarray*}
		\|\sum_{j=1}^4\frac{1}{d_j}m_f(r,D_j)\le (3+\varepsilon)T_f(r).
	\end{eqnarray*}
\end{coro}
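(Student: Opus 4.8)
The plan is to derive this directly from Theorem \ref{thm1.4}. Here $X=\mathbb{P}^2(\mathbb{C})$, so $n=2$; take $A$ to be a hyperplane, so $\mathcal{O}(A)=\mathcal{O}_{\mathbb{P}^2(\mathbb{C})}(1)$ is ample with $T_{f,A}(r)=T_f(r)+O(1)$, and since $D_j$ has degree $d_j$ we have $D_j\sim d_jA$. Each $D_j$ is an effective Cartier divisor on the smooth variety $\mathbb{P}^2(\mathbb{C})$, and $f$ is algebraically non-degenerate by hypothesis. Thus everything comes down to pinning down the correct $m$ and $\kappa$ and reading off the constant in \eqref{eqN-D-0}.

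First I would check the position hypotheses. Since $D_1,\dots,D_4$ are pairwise distinct irreducible curves, no one of them is contained in another, so for any $I\subset\{1,2,3,4\}$ with $\sharp I\ge 2$ the set $\bigcap_{i\in I}{\rm Supp}\,D_i$ is a proper closed subset of an irreducible curve, hence $\dim\bigcap_{i\in I}{\rm Supp}\,D_i\le 0$. Taking $m=4$ ($\ge n=2$), the inequality $\dim\bigcap_{i\in I}{\rm Supp}\,D_i\le m-\sharp I$ then holds for every $I$ with $\sharp I\le m+1=5$: it is trivial for $\sharp I=1$ (where the bound is $3$) and follows from the previous observation for $2\le\sharp I\le 4$ (where the bound is $\ge 0$). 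Hence $D_1,\dots,D_4$ are in $4$-subgeneral position. Moreover, for any $J$ with $\sharp J=2$ we get ${\rm codim}\bigcap_{j\in J}{\rm Supp}\,D_j\ge 2=\sharp J$, and trivially ${\rm codim}\,D_j\ge 1$; so $D_1,\dots,D_4$ are in $4$-subgeneral position with index $\kappa=2$ (which is the largest admissible value, since $\kappa\le n=2$). With $m=4$, $n=2$, $\kappa=2$ the coefficient in \eqref{eqN-D-0} is $\max\{m/(2\kappa),1\}(n+1)=\max\{1,1\}\cdot 3=3$, and Theorem \ref{thm1.4} applied to $D_1,\dots,D_4$, $A$ and $f$ gives
$$\|\sum_{j=1}^4\frac{1}{d_j}m_f(r,D_j)\le(3+\varepsilon)T_{f,A}(r)=(3+\varepsilon)T_f(r),$$
which is the assertion.

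There is essentially no obstacle here: the entire content sits in Theorem \ref{thm1.4}, and the only thing to be careful about is the bookkeeping for $m$ and $\kappa$. It is worth noting that one cannot in general replace $m=4$ by $m=3$, because four distinct irreducible curves in $\mathbb{P}^2(\mathbb{C})$ may have a common point, in which case $\bigcap_{j=1}^4{\rm Supp}\,D_j\ne\emptyset$ and $3$-subgeneral position fails. Thus the corollary lies exactly on the borderline $m=2\kappa$ highlighted after Theorem \ref{thm1.4}, which is precisely why the sharp constant $n+1=3$ survives.
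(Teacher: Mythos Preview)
Your proof is correct and follows essentially the same route as the paper, which simply records that the corollary follows from Theorem \ref{thm1.4} with $m\le 4$, $n=\kappa=2$. Your added verification that four distinct irreducible curves in $\mathbb{P}^2(\mathbb{C})$ are automatically in $4$-subgeneral position with index $2$ is exactly the bookkeeping the paper leaves implicit.
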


Corollary \ref{coro1.6} follows from Theorem \ref{thm1.4} with $m\leq 4,n=\kappa=2$.

\begin{proof}[{\bf Proof of Theorem \ref{thm1.3}}]

It suffices to show (\ref{eqN-0}) for $m>n$.

Now, we consider $H_1\cap \phi(X),\ldots,H_q\cap \phi(X)$, which are located in $m$-subgeneral position with index $\kappa$.

Given $z\in \mathbb{C}$, we arrange so that
$$
\lambda_{H_{1,z}}(\phi\circ f(z))\ge \lambda_{H_{2,z}}(\phi\circ f(z))\ge\cdots\ge \lambda_{H_{m,z}}(\phi\circ f(z))\ge\cdots\ge\lambda_{H_{q,z}}(\phi\circ f(z)),
$$
which implies
\begin{eqnarray}
\sum_{j=1}^q\lambda_{H_j}(\phi\circ f(z))\le \sum_{j=1}^m\lambda_{H_{j,z}}(\phi\circ f(z))+O(1).\label{eqN-2}
\end{eqnarray}

By using the method in \cite{6.5}, we can construct $n$ hyperplanes $\widetilde{H}_{1,z},\ldots,\widetilde{H}_{n,z}$ in $\mathbb{P}^M(\mathbb{C})$ with respect to $H_{1,z}\cap \phi(X),\ldots,H_{m,z}\cap \phi(X)$ such that $\widetilde{H}_{1,z},\ldots,\widetilde{H}_{n,z}$ are in general position on $\phi(X)$.

We start with $\widetilde{H}_{1,z}=H_{1,z}$. For each irreducible component $\Gamma$ of codimension $1$ of $H_{1,z}\cap \phi(X)$, let
\begin{eqnarray*}
V_{\Gamma}=\left\{{\bf{a}}=(a_2,\ldots,a_{m-n+2})\in \mathbb{C}^{m-n+1}\left|\Gamma\subset H_{\bf{a}}\ \mbox{defined by}\ L_{\bf{a}}=\sum_{j=2}^{m-n+2}a_jL_{j,z}\right.\right\}.
\end{eqnarray*}
Where $H_{\bf{a}}=\mathbb{P}^M(\mathbb{C})$ with ${\bf{a}}=(0,\ldots,0)\in \mathbb{C}^{m-n+1}$. By definition, $V_{\Gamma}$ is a subspace of $\mathbb{C}^{m-n+1}$. Since
$$
{\rm{codim}}\bigcap_{j=1}^{m-n+2}(H_{j,z}\cap \phi(X))\ge 2,
$$
there exists $i(2\le i\le m-n+2)$ such that $\Gamma\not\subset H_{i,z}\cap \phi(X)$. This implies that $V_{\Gamma}$ is a proper subspace of $\mathbb{C}^{m-n+1}$. Since the set of irreducible components of $H_{1,z}\cap \phi(X)$ with codimension $1$ is finite,
$$
\mathbb{C}^{m-n+1}\setminus\bigcup_{\Gamma}V_{\Gamma}\neq \emptyset.
$$
Fix $\widetilde{H}_{2,z}$ defined by $\widetilde{L}_{2,z}=\sum\limits_{i=2}^{m-n+2}a_iL_{i,z}$, where ${\bf{a}}=(a_2,\ldots,a_{m-n+2})\in \mathbb{C}^{m-n+1}\setminus\bigcup_{\Gamma}V_{\Gamma}$. Obviously, ${\rm{codim}}(\widetilde{H}_{1,z}\cap\widetilde{H}_{2,z}\cap \phi(X))\ge 2$. For each irreducible component $\Gamma'$ of codimension $2$ of $\widetilde{H}_{1,z}\cap\widetilde{H}_{2,z}\cap \phi(X)$, put
\begin{eqnarray*}
V_{\Gamma'}=\left\{{\bf{a}}=(a_2,\ldots,a_{m-n+3})\in \mathbb{C}^{m-n+2}\left|\Gamma\subset H_{\bf{a}}\ \mbox{defined by}\ L_{\bf{a}}=\sum_{j=2}^{m-n+3}a_jL_{j,z}\right.\right\}.
\end{eqnarray*}
Since
$$
{\rm{codim}}\bigcap_{j=1}^{m-n+3}(H_{j,z}\cap \phi(X))\ge 3,
$$
there exists $i(2\le i\le m-n+3)$ such that $\Gamma'\not\subset H_{i,z}\cap \phi(X)$. This implies that $V_{\Gamma'}$ is a proper subspace of $\mathbb{C}^{m-n+2}$. Since the set of irreducible components of $\widetilde{H}_{1,z}\cap\widetilde{H}_{2,z}\cap \phi(X)$ with codimension $2$ is finite,
$$
\mathbb{C}^{m-n+2}\setminus\bigcup_{\Gamma'}V_{\Gamma'}\neq \emptyset.
$$
Fix $\widetilde{H}_{3,z}$ defined by $\widetilde{L}_{3,z}=\sum\limits_{i=2}^{m-n+3}a_iL_{i,z}$, where ${\bf{a}}=(a_2,\ldots,a_{m-n+3})\in \mathbb{C}^{m-n+1}\setminus\bigcup_{\Gamma'}V_{\Gamma'}$. Hence, ${\rm{codim}}\widetilde{H}_{1,z}\cap\widetilde{H}_{2,z}\cap\widetilde{H}_{3,z}\cap \phi(X)\ge 3$. Repeating the above argument, we obtain $\widetilde{H}_{1,z},\ldots,\widetilde{H}_{n,z}$ which are in general position on $\phi(X)$.

Since there are only finitely many choices of $m$ divisors in $\{H_1\cap \phi(X),\ldots,H_q\cap \phi(X)\}$, we can find a positive constant $C$, independent of $z$, such that, for $i=2,\ldots,n$ and all $z\in \mathbb{C}$,
\begin{eqnarray*}
|\widetilde{L}_{i,z}(\phi\circ f(z))|\le C\max_{2\le j\le m-n+i}|L_{j,z}(\phi\circ f(z))|=C|L_{m-n+i,z}(\phi\circ f(z))|,
\end{eqnarray*}
which implies
\begin{eqnarray*}
\lambda_{H_{m-n+i,z}}(\phi\circ f(z))\le \lambda_{\widetilde{H}_{i,z}}(\phi\circ f(z))+O(1).
\end{eqnarray*}

In summary, we have
\begin{eqnarray}
\sum_{j=1}^m\lambda_{H_{j,z}}(\phi\circ f(z))\le \sum_{i=1}^n\lambda_{\widetilde{H}_{i,z}}(\phi\circ f(z))+\sum_{j=2}^{m-n+1}\lambda_{H_{j,z}}(\phi\circ f(z))+O(1).\label{eqN-3}
\end{eqnarray}
If $\kappa\ge m-n$, then $H_{2,z},\ldots,H_{m-n+1,z}$ are in general position on $\phi(X)$. If $\kappa<m-n$, then $H_{2,z},\ldots,H_{\kappa+1,z}$ are in general position on $\phi(X)$ and
\begin{eqnarray}
\sum_{j=2}^{m-n+1}\lambda_{H_{j,z}}(\phi\circ f(z))\le \frac{m-n}{\kappa}\sum_{j=2}^{\kappa+1}\lambda_{H_{j,z}}(\phi\circ f(z)).\label{eqN-4}
\end{eqnarray}

Note that the union of all hyperplanes $H$ and $\widetilde{H}$ constructed above is a finite set, which may be written as $\{\widetilde{\widetilde{H}}_1,\ldots,\widetilde{\widetilde{H}}_T\}$. By (\ref{eqN-2}), (\ref{eqN-3}) and (\ref{eqN-4}), we have, for every $\varepsilon>0$,
\begin{eqnarray}
\sum_{j=1}^qm_{\phi\circ f}(r,H_j)\le\left(\frac{m-n}{\min\{m-n,\kappa\}}+1\right)\int_0^{2\pi}\max_{\mathcal{K}}\sum_{j\in \mathcal{K}}\lambda_{\widetilde{\widetilde{H}}_j}(\phi\circ f(re^{i\theta}))\frac{d\theta}{2\pi}+O(1),\label{eqN-5}
\end{eqnarray}
where $\max_{\mathcal{K}}$ is taken over all subsets $\mathcal{K}$ of $\{1,\ldots,T\}$ such that $\{\widetilde{\widetilde{H}}_j\}_{j\in \mathcal{K}}$ are in general position on $\phi(X)$.

Now, we directly use the following general form of Theorem A given by Vojta (see also Theorem 22.5 in \cite{12} for the arithmetic case).

\noindent{\bf Theorem E. }(Theorem 22.6 in \cite{12})\quad Let $X\subset \mathbb{P}^M(\mathbb{C})$ be a complex projective variety with $\dim X=n$. Let $D_1,\ldots,D_q$ be hypersurfaces in $\mathbb{P}^M(\mathbb{C})$ whose supports do not contain $X$. Set $d_j=\deg D_j$ for $j=1,\ldots,q$. Let $f:{\mathbb{C}}\rightarrow X$ be an algebraically non-degenerate holomorphic curve. Then, for every $\varepsilon>0$,
\begin{eqnarray*}
\|\int_0^{2\pi}\max_{\mathcal{K}}\sum_{j\in \mathcal{K}}\frac{\lambda_{D_j}(f(re^{i\theta}))}{d_j}\frac{d\theta}{2\pi}\le (n+1+\varepsilon)T_{f}(r),
\end{eqnarray*}
where $\max_{\mathcal{K}}$ is taken over all subsets $\mathcal{K}$ of $\{1,\ldots,q\}$ such that $\{D_j\}_{j\in \mathcal{K}}$ are in general position on $X$.

Now, by (\ref{eqN-5}) and Theorem E, we have
\begin{eqnarray*}
\|\sum_{j=1}^qm_{\phi\circ f}(r,H_j)\le \left(\frac{m-n}{\min\{m-n,\kappa\}}+1\right)(n+1+\varepsilon)T_{\phi\circ f}(r).
\end{eqnarray*}
The desired estimate (\ref{eqN-0}) follows from $T_{\phi\circ f}(r)=T_{f,\widetilde{N}A}(r)+O(1)=\widetilde{N}T_{f,A}(r)+O(1)$, (\ref{eqN-1}) and the above inequality.
\end{proof}

\section{Proof of Theorem \ref{thm1.2}}

\begin{proof}[{\bf Proof of Theorem \ref{thm1.2}}]

Let $N_0$ be a positive integer such that $h^0(\mathcal{L}^N)> 1$ for every integer $N\ge N_0$.

Let $\varepsilon>0$ be given, pick a positive integer $N\ge N_0$ such that
\begin{eqnarray*}
\max_{1\le j\le q}\frac{Nh^0(\mathcal{L}^N)}{\sum_{\alpha=1}^{\infty}h^0(\mathcal{L}^N(-\alpha D_j))}<\max_{1\le j\le q}\gamma(\mathcal{L},D_j)+\frac{\varepsilon}{2},
\end{eqnarray*}
and fix a positive integer $b$ such that
\begin{eqnarray}\label{eeq4.23}
\frac{b+\kappa}{b}\max_{1\le j\le q}\frac{Nh^0(\mathcal{L}^N)}{\sum_{\alpha=1}^{\infty}h^0(\mathcal{L}^N(-\alpha D_j))}<\max_{1\le j\le q}\gamma(\mathcal{L},D_j)+\frac{\varepsilon}{2}.\label{eq3-6}
\end{eqnarray}

Given $z\in \mathbb{C}$, we arrange so that
$$
\lambda_{D_{1,z}}(f(z))\ge \lambda_{D_{2,z}}(f(z))\ge\cdots\ge \lambda_{D_{{\kappa},z}}(f(z))\ge\cdots\ge\lambda_{D_{m,z}}(f(z))\ge\cdots\ge\lambda_{D_{q,z}}(f(z)),
$$
then we have
\begin{eqnarray}
\sum_{j=1}^q\lambda_{D_j}(f(z))\le \sum_{j=1}^m\lambda_{D_{j,z}}(f(z))+O(1)\le\frac{m}{{\kappa}}\sum_{j=1}^{\kappa}\lambda_{D_{j,z}}(f(z))+O(1).\label{eq3-7}
\end{eqnarray}

Note that $D_{1,z},\ldots,D_{{\kappa},z}$ are located in general position and $X$ is smooth. Thus, we can use the filtration constructed in \cite{9,7}. Now we consider the following filtration of $H^0(X,\mathcal{L}^N)$ with respect to $\{D_{1,z},\ldots,D_{{\kappa},z}\}$.

Let $\Delta=\{{\bf a}=(a_i)\in \mathbb{Z}_{\ge 0}^{\kappa}|\sum\limits_{i=1}^{\kappa}a_i=b \}$ where $b$ is the integer in \eqref{eeq4.23}.
For ${\bf a}\in \Delta$ and $x\in \mathbb{R}_{\ge 0}$, let
$$
N({\bf a},x)=\left\{{\bf b}=(b_i)\in \mathbb{Z}_{\ge 0}^{\kappa}\left|\sum_{i=1}^{\kappa}a_ib_i\ge bx\right.\right\}
$$
and
$$
\mathcal{I}({\bf a},x)=\sum_{{\bf b}\in N({\bf a},x)}\mathcal{O}_X\left(-\sum_{i=1}^{\kappa}b_iD_{i,z}\right)
$$
be an ideal of $\mathcal{O}_X$. Set
$$
\mathcal{F}({\bf a})_x=H^0(X,\mathcal{L}^N\otimes\mathcal{I}({\bf a},x))
$$
and
$$
F({\bf a})=\frac{1}{h^0(\mathcal{L}^N)}\int_0^{+\infty}(\dim \mathcal{F}({\bf a})_x)dx.
$$
Then $(\mathcal{F}({\bf a})_x)_{x\in \mathbb{R}_{\ge 0}}$ is a filtration of $H^0(X,\mathcal{L}^N)$ and for any basis $\mathcal{B}_{\bf a}$ of $H^0(X,\mathcal{L}^N)$ with respect to the above filtration $(\mathcal{F}({\bf a})_x)_{x\in \mathbb{R}_{\ge 0}}$, we have
$$
F({\bf a})=\frac{1}{h^0(\mathcal{L}^N)}\sum_{s\in \mathcal{B}_{\bf a}}\mu_{\bf a}(s),
$$
where $\mu_{\bf a}(s)=\sup\{\mu\in \mathbb{R}_{\ge 0}|s\in \mathcal{F}({\bf a})_{\mu}\}$. 
Since for a fixed ${\bf b}\in \mathbb{Z}_{\ge 0}^{\kappa}$, the set of $x$ with ${\bf b}\in N({\bf a},x)$ is a closed interval of $\mathbb{R}_{\ge 0}$, ``$\sup$" can be replaced by ``$\max$". Hence it follows that
\begin{equation}\label{eqmax}
\mu_{\bf a}(s)=\max\{\mu\in \mathbb{R}_{\ge 0}|s\in \mathcal{F}({\bf a})_{\mu}\}
\end{equation}
By Theorem 3.6 in \cite{9} (also see Proposition 4.14 in \cite{7}), we know
\begin{eqnarray}
F({\bf a})\ge \min_{1\le j\le q}\left(\frac{1}{h^0(\mathcal{L}^N)}\sum_{\alpha=1}^{\infty}h^0(\mathcal{L}^N(-\alpha D_j))\right),\label{eq3-8}
\end{eqnarray}
which implies
\begin{eqnarray}
\sum_{s\in \mathcal{B}_{\bf a}}\mu_{\bf a}(s)\ge \min_{1\le j\le q}\sum_{\alpha=1}^{\infty}h^0(\mathcal{L}^N(-\alpha D_j)).\label{eq3-9}
\end{eqnarray}

We note that any such $s\in \mathcal{B}_{\bf a}$ can be written locally as
$$
s=\sum_{{\bf b}}f_{{\bf b}}\prod_{i=1}^{\kappa}1^{b_i}_{D_{i,z}},
$$
where $f_{{\bf b}}$ is a local section of $\mathcal{L}^N(-\sum\limits_{i=1}^{\kappa}b_iD_{i,z})$, $1_{D_{i,z}}$ is the canonical section of $\mathcal{O}(D_{i,z})$, and the sum is taken for all ${\bf b}\in \mathbb{Z}_{\ge 0}^{\kappa}$ with $\sum\limits_{i=1}^{\kappa}a_ib_i\ge b\mu_{\bf a}(s)$ where we have used \eqref{eqmax}. Actually, $f_{{\bf b}}=0$ for all but finitely many ${\bf b}$.
By a compactness argument, there exists a finite open covering $\{U_j\}_{j\in J_{{\bf a},s}}$ of $X$ and a finite set $K_{{\bf a},s}\subset \mathbb{Z}_{\ge 0}^{\kappa}$ such that
$$
s=\sum_{{\bf b}\in K_{{\bf a},s}}f_{s,j,{\bf b}}\prod_{i=1}^{\kappa}1^{b_i}_{D_{i,z}}
$$
on $U_j$ for all $j\in J_{{\bf a},s}$, where $f_{s,j,{\bf b}}\in \Gamma\left(U_j,\mathcal{L}^N(-\sum\limits_{i=1}^{\kappa}b_iD_{i,z})\right)$ and all ${\bf b}\in K_{{\bf a},s}$ satisfy $\sum\limits_{i=1}^{\kappa}a_ib_i\ge b\mu_{\bf a}(s)$. Hence
\begin{equation}\label{eq3-8-1}
\lambda_{(s)}(f(z))\ge \min_{{\bf b}\in K_{{\bf a},s}}\sum_{i=1}^{\kappa}b_i\lambda_{D_{i,z}}(f(z))+O(1).
\end{equation}

Set $t_i=\frac{\lambda_{D_{i,z}}(f(z))}{\sum_{j=1}^{\kappa}\lambda_{D_{j,z}}(f(z))}$, $i=1,\ldots,\kappa$, then $\sum\limits_{i=1}^{\kappa}t_i=1$.
Choose ${\bf a}=(a_i)\in \Delta$ such that $a_i\le (b+\kappa)t_i$ for $i=1,\ldots,{\kappa}$,
i.e.,
\begin{eqnarray*}
\lambda_{(s)}(f(z))&\ge& \min_{{\bf b}\in K_{{\bf a},s}}\sum_{i=1}^{\kappa}b_i\lambda_{D_{i,z}}(f(z))+O(1)\\
&\ge&\left(\sum_{j=1}^{\kappa}\lambda_{D_{j,z}}(f(z))\right)\min_{{\bf b}\in K_{{\bf a},s}}\sum_{i=1}^{\kappa}\frac{b_ia_i}{b+\kappa}+O(1)\\
&\ge&\frac{b\mu_{\bf a}(s)}{b+\kappa}\sum_{j=1}^{\kappa}\lambda_{D_{j,z}}(f(z))+O(1).
\end{eqnarray*}
By (\ref{eq3-7}) and (\ref{eq3-9}), we have
\begin{eqnarray}
\sum_{s\in \mathcal{B}_{\bf a}}\lambda_{(s)}(f(z))&\ge&\left(\frac{b}{b+\kappa}\sum_{s\in \mathcal{B}_{\bf a}}\mu_{\bf a}(s)\right)\sum_{j=1}^{\kappa}\lambda_{D_{j,z}}(f(z))+O(1)\nonumber\\
&\ge&\frac{b}{b+\kappa}\left(\min_{1\le j\le q}\sum_{\alpha=1}^{\infty}h^0(\mathcal{L}^N(-\alpha D_j))\right)\sum_{j=1}^{\kappa}\lambda_{D_{j,z}}(f(z))+O(1)\nonumber\\
&\ge&\frac{b}{b+\kappa}\left(\min_{1\le j\le q}\sum_{\alpha=1}^{\infty}h^0(\mathcal{L}^N(-\alpha D_j))\right)\frac{{\kappa}}{m}\sum_{j=1}^q\lambda_{D_j}(f(z))+O(1).\label{eq3-12}
\end{eqnarray}

Since there are only finitely many choices of $ {\bf a}\in \Delta$ and ${\kappa}$ divisors in $\{D_1,\ldots,D_q\}$, we note that the set of such basis $\mathcal{B}_{\bf a}$ is a finite set, which may be written as $\{\mathcal{B}_{1},\ldots,\mathcal{B}_{T_1}\}$. Set
$$
\mathcal{B}_{1}\cup\cdots\cup\mathcal{B}_{T_1}=\{s_{1},\ldots,s_{T_2}\}.
$$
For each $i=1,\ldots,T_1$, let $J_i\subseteq\{1,\ldots,T_2\}$ be the subset such that $\mathcal{B}_{i}=\{s_j|j\in J_i\}$. Then, by (\ref{eq3-12}),
\begin{eqnarray}
\frac{b}{b+\kappa}\left(\min_{1\le j\le q}\sum_{\alpha=1}^{\infty}h^0(\mathcal{L}^N(-\alpha D_j))\right)\frac{{\kappa}}{m}\sum_{j=1}^q\lambda_{D_j}\le \max_{1\le i\le T_1}\sum_{j\in J_i}\lambda_{(s_j)}+O(1).\label{eq3-13}
\end{eqnarray}
By Theorem D with $\varepsilon$ taken as $\frac{\varepsilon b }{2(b+\kappa)(1+\max_{1\le j\le q}\gamma(\mathcal{L},D_j))}$,
{\small \begin{eqnarray}
\|\int_0^{2\pi}\max_{1\le i\le T_1}\sum_{j\in J_i}\lambda_{(s_j)}(f(re^{i\theta}))\frac{d\theta}{2\pi}\le \left(\ell+\frac{\varepsilon b }{2(b+\kappa)(1+\max\limits_{1\le j\le q}\gamma(\mathcal{L},D_j))}\right)T_{f,\mathcal{L}^N}(r),\label{eq3-14}
\end{eqnarray}}
where $\ell=h^0(\mathcal{L}^N)$.

Finally, by (\ref{eq3-6}), (\ref{eq3-13}) and (\ref{eq3-14}), we obtain

\begin{eqnarray*}
	\|\sum_{j=1}^qm_f(r,D_j)&\le&\frac{m}{{\kappa}}\frac{(b+\kappa)}{b}\frac{\ell+\frac{\varepsilon b }{2(b+\kappa)(1+\max_{1\le j\le q}\gamma(\mathcal{L},D_j))}}{\min_{1\le j\le q}\sum_{\alpha=1}^{\infty}h^0(\mathcal{L}^N(-\alpha D_j))}T_{f,\mathcal{L}^N}(r)+O(1)\\
	&\le&\frac{m}{{\kappa}}\frac{(b+\kappa)}{b}\frac{\ell+\frac{\varepsilon b }{2(b+\kappa)(1+\max_{1\le j\le q}\gamma(\mathcal{L},D_j))}}{\min_{1\le j\le q}\sum_{\alpha=1}^{\infty}h^0(\mathcal{L}^N(-\alpha D_j))}NT_{f,\mathcal{L}}(r)+O(1)\\
	&\le&\frac{m}{{\kappa}}\left(\max_{1\le j\le q}\gamma(\mathcal{L},D_j)+\varepsilon\right)T_{f,\mathcal{L}}(r)+O(1).
\end{eqnarray*}
\end{proof}

\begin{remark}\label{rk3.2}
We assume that $X$ is smooth in Theorem \ref{thm1.2} because we use the filtration which is valid for properly intersecting divisors. In general case, Hussein and Ru \cite{8-3} obtained the following conclusion for $D_1,\ldots,D_q$ being effective Cartier divisors in $m$-subgeneral position ($m\ge n$ and $m>1$) on $X$:
$$
\|m_f(r,D)\le \left(\frac{m(m-1)}{m+n-2}\max_{1\le j\le q}\gamma(\pi^*\mathcal{L},\pi^*D_j)+\varepsilon\right)T_{f,\mathcal{L}}(r),
$$
where $\pi:\widetilde{X}\rightarrow X$ is the normalization of $X$.
Actually, under an additional assumption that $\pi^*D_i$ and $\pi^*D_j$ have no common components for $i\neq j$, we improve Hussein and Ru's result as follows.
\end{remark}

\begin{theorem}\label{thm1.1}
Let $X$ be a complex projective variety, and let $D_1,\ldots,D_q$ be effective Cartier divisors in $m$-subgeneral position ($m>1$) on $X$. Assume that $\pi^*D_i$ and $\pi^*D_j$ have no common components for $i\neq j$, where $\pi:\widetilde{X}\rightarrow X$ is the normalization of $X$. Let $\mathcal{L}$ be a line sheaf on $X$ with $h^0(\mathcal{L}^N)> 1$ for $N$ big enough. Let $f:{\mathbb{C}}\rightarrow X$ be an algebraically non-degenerate holomorphic curve. Then, for every $\varepsilon>0$,
\begin{eqnarray}
\|m_f(r,D)\le \left(\frac{m}{2}\max_{1\le j\le q}\gamma(\pi^*\mathcal{L},\pi^*D_j)+\varepsilon\right)T_{f,\mathcal{L}}(r).\label{eq1-3}
\end{eqnarray}
\end{theorem}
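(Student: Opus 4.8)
The plan is to reduce to the normalization $\pi:\widetilde X\to X$ and then run the filtration argument from the proof of Theorem \ref{thm1.2} with index $\kappa=2$; the point of the hypothesis that $\pi^*D_i$ and $\pi^*D_j$ have no common components for $i\neq j$ is precisely to guarantee that the pulled-back divisors intersect properly in pairs, which is what makes the Autissier--Ru--Vojta filtration available on the (possibly singular) normal variety $\widetilde X$, cf.\ Remark \ref{rk3.2}. As a first step, since $f(\mathbb C)$ is Zariski dense it is not contained in the non-normal locus of $X$, so $f$ lifts to an algebraically non-degenerate holomorphic curve $\widetilde f:\mathbb C\to\widetilde X$. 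By functoriality $m_f(r,D)=m_{\widetilde f}(r,\pi^*D)+O(1)$ and $T_{f,\mathcal L}(r)=T_{\widetilde f,\pi^*\mathcal L}(r)+O(1)$; and since $\pi$ is finite and birational we have $\dim\pi^{-1}(Z)=\dim Z$ for closed $Z\subseteq X$ and $h^0((\pi^*\mathcal L)^N)\ge h^0(\mathcal L^N)>1$, so $\pi^*D_1,\ldots,\pi^*D_q$ are again in $m$-subgeneral position and $\pi^*\mathcal L$ still satisfies the hypothesis. Hence it suffices to prove \eqref{eq1-3} with $X$ normal and $\mathcal L, D_j, f$ in place of $\pi^*\mathcal L, \pi^*D_j, \widetilde f$; we relabel accordingly.

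The key local fact is that any two of the (pairwise no-common-component) divisors $D_i, D_j$ intersect properly on the normal variety $X$. Indeed, if $\phi_1,\phi_2\in\mathcal O_{X,\mathbf x}$ are local defining functions, then $\phi_1$ is a nonzerodivisor in the integral local ring $\mathcal O_{X,\mathbf x}$, and since $X$ is normal the quotient $\mathcal O_{X,\mathbf x}/(\phi_1)$ satisfies Serre's condition $S_1$, i.e.\ has no embedded primes; thus its associated primes are exactly the height-one primes over $\phi_1$, namely the local branches of $D_i$. As $D_j$ contains no component of $D_i$, the function $\phi_2$ lies in none of these primes, so it is a nonzerodivisor modulo $\phi_1$ and $(\phi_1,\phi_2)$ is a regular sequence. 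In particular $D_1,\ldots,D_q$ are in $m$-subgeneral position with index $\kappa\ge 2$, and for any two of them the filtration used in the proof of Theorem \ref{thm1.2} is legitimate.

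With this in hand, I would repeat the proof of Theorem \ref{thm1.2} verbatim, taking $\kappa=2$ everywhere. For each $z\in\mathbb C$, order the Weil functions so that $\lambda_{D_{1,z}}(f(z))\ge\cdots\ge\lambda_{D_{q,z}}(f(z))$; then $m$-subgeneral position gives $\sum_{j=1}^q\lambda_{D_j}(f(z))\le\sum_{j=1}^m\lambda_{D_{j,z}}(f(z))+O(1)\le\frac m2(\lambda_{D_{1,z}}(f(z))+\lambda_{D_{2,z}}(f(z)))+O(1)$, replacing \eqref{eq3-7}. For $\mathbf a$ in $\Delta=\{\mathbf a\in\mathbb Z_{\ge0}^2: a_1+a_2=b\}$ with $b$ chosen as in \eqref{eeq4.23} for $\kappa=2$, form the filtrations $(\mathcal F(\mathbf a)_x)_x$ of $H^0(X,\mathcal L^N)$ relative to $\{D_{1,z},D_{2,z}\}$; the inequality \eqref{eq3-8} (Theorem 3.6 of \cite{9}, Proposition 4.14 of \cite{7}), valid here by the previous step, together with the local expansion of basis sections, yields the analogues of \eqref{eq3-12} and \eqref{eq3-13} with $\kappa=2$. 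Since there are only finitely many pairs among $D_1,\ldots,D_q$ and finitely many $\mathbf a$, Theorem D applied to the union of the resulting bases gives $\|m_f(r,D)\le(\frac m2\max_{1\le j\le q}\gamma(\mathcal L,D_j)+\varepsilon)T_{f,\mathcal L}(r)$, and undoing the normalization reduction gives \eqref{eq1-3}. The routine parts are exactly as in Theorem \ref{thm1.2}; the one genuinely new point, and the step that needs care, is the proper-intersection claim of the previous paragraph --- that ``no common components'' on a \emph{normal} variety really produces a regular sequence, so that the filtration machinery, which is formulated for properly intersecting divisors on an arbitrary projective variety, indeed applies. I do not anticipate a serious difficulty beyond that.
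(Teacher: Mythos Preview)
Your argument is correct, but it takes a different route from the paper's own proof. You specialize the Autissier--Ru--Vojta two-parameter filtration of Theorem~\ref{thm1.2} to $\kappa=2$, and you justify its applicability on the normal variety $\widetilde X$ by proving that ``no common components'' for two Cartier divisors on a normal variety forces the local defining functions to form a regular sequence (via the $S_2$ property of normal local rings). This is a valid and self-contained reduction, and it makes Theorem~\ref{thm1.1} literally a special case of the machinery behind Theorem~\ref{thm1.2}.

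The paper instead avoids the two-parameter filtration and the regular-sequence algebra altogether. For each of $D_{1,z}$ and $D_{2,z}$ it takes the \emph{one-parameter} filtration $H^0(X,\mathcal L^N)\supset H^0(X,\mathcal L^N(-D_\mu))\supset H^0(X,\mathcal L^N(-2D_\mu))\supset\cdots$, then uses the joint-filtration Lemma~\ref{lem3.1} to produce a single basis $\{s_1,\ldots,s_\ell\}$ adapted to both. Summing orders along each filtration gives $\sum_i(s_i)\ge c\,D_{1,z}$ and $\sum_i(s_i)\ge c\,D_{2,z}$ with $c=\min_j\sum_\alpha h^0(\mathcal L^N(-\alpha D_j))$; the ``no common components'' hypothesis is used only at this elementary level, to add these two divisor inequalities and obtain $\sum_i(s_i)\ge c\,(D_{1,z}+D_{2,z})$. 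Theorem~D then finishes as in your outline. This approach is more elementary---no Autissier estimate, no auxiliary integer $b$, no Serre-condition argument---while yours has the virtue of being a uniform specialization of the $\kappa$-divisor method, at the cost of the extra commutative-algebra step.
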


Unlike the above mentioned second main theorems for subgeneral position divisors, (\ref{eq1-3}) is still meaningful if $m<n$. Moreover, the assumption that $\pi^*D_i$ and $\pi^*D_j$ have no common components for $i\neq j$ is automatically satisfied if $m<n$ or $m\ge n$ with $\kappa>1$.

\begin{proof}[{\bf Proof of Theorem \ref{thm1.1}}]
Let $N_0$ be a positive integer such that $h^0(\mathcal{L}^N)> 1$ holds for every integer $N\ge N_0$.

Suppose first that $X$ is normal. Let $\varepsilon>0$ be given, pick a positive integer $N\ge N_0$ such that
\begin{eqnarray}
\max_{1\le j\le q}\frac{Nh^0(\mathcal{L}^N)}{\sum_{\alpha=1}^{\infty}h^0(\mathcal{L}^N(-\alpha D_j))}<\max_{1\le j\le q}\gamma(\mathcal{L},D_j)+\frac{\varepsilon}{2}.\label{eq3-2}
\end{eqnarray}

Given $z\in \mathbb{C}$, we arrange so that
$$
\lambda_{D_{1,z}}(f(z))\ge \lambda_{D_{2,z}}(f(z))\ge\cdots\ge\lambda_{D_{m,z}}(f(z))\ge\cdots\ge\lambda_{D_{q,z}}(f(z)),
$$
then
\begin{eqnarray}
\sum_{j=1}^q\lambda_{D_j}(f(z))\le \sum_{j=1}^m\lambda_{D_{j,z}}(f(z))+O(1)\le \frac{m}{2}(\lambda_{D_{1,z}}(f(z))+\lambda_{D_{2,z}}(f(z)))+O(1).\label{eq3-3}
\end{eqnarray}

For each $D_{\mu}$, consider the following filtration of $H^0(X,\mathcal{L}^N)$:
\begin{eqnarray*}
H^0(X,\mathcal{L}^N)\supset H^0(X,\mathcal{L}^N(-D_{\mu}))\supset H^0(X,\mathcal{L}^N(-2D_{\mu}))\supset\cdots\supset H^0(X,\mathcal{L}^N(-\alpha D_{\mu}))\supset \cdots
\end{eqnarray*}
Let $\mathcal{B}_{\mu}=\{s_1,\ldots,s_\ell\}$ be a basis of $H^0(X,\mathcal{L}^N)$ with respect to the above filtration, where $\ell=h^0(\mathcal{L}^N)$. Write $W_{\alpha}=H^0(X,\mathcal{L}^N(-\alpha D_{\mu}))$, $\alpha\ge 0$, any section $s\in W_{\alpha}\setminus W_{\alpha+1}$ can be written locally as
$$
s=f_{\alpha}1^{\alpha}_{D_{\mu}},
$$
where $f_{\alpha}$ is a local section of $\mathcal{L}^N(-\alpha D_{\mu})$ and $1_{D_{\mu}}$ is the canonical section of $\mathcal{O}(D_{\mu})$.

Hence
\begin{eqnarray*}
\sum_{i=1}^\ell{\rm ord}_{E}(s_{i})&\ge&\left(\sum_{\alpha=1}^{\infty}\alpha \dim W_{\alpha}/W_{\alpha+1}\right){\rm ord}_{E}D_{\mu}= \left(\sum_{\alpha=1}^{\infty}\dim W_{\alpha}\right){\rm ord}_{E}D_{\mu}\\
&=&\left(\sum_{\alpha=1}^{\infty}h^0(\mathcal{L}^N(-\alpha D_{\mu(z)}))\right){\rm ord}_{E}D_{\mu}\\
&\ge& \left(\min_{1\le j\le q}\sum_{\alpha=1}^{\infty}h^0(\mathcal{L}^N(-\alpha D_{j}))\right){\rm ord}_{E}D_{\mu}
\end{eqnarray*}
for any irreducible component $E$ in $D_{\mu}$, i.e.,
$$
\sum_{i=1}^\ell(s_{i})\ge \left(\min_{1\le j\le q}\sum_{\alpha=1}^{\infty}h^0(\mathcal{L}^N(-\alpha D_{j}))\right)D_{\mu}.
$$
For divisors $D_{1,z}$, $D_{2,z}$, we can construct two filtrations of $H^0(X,\mathcal{L}^N)$. By Lemma \ref{lem3.1}, let $\mathcal{B}=\{s_1,\ldots,s_\ell\}$ be the basis of $H^0(X,\mathcal{L}^N)$ with respect to these two filtrations, then we have
$$
\sum_{i=1}^\ell(s_{i})\ge \left(\min_{1\le j\le q}\sum_{\alpha=1}^{\infty}h^0(\mathcal{L}^N(-\alpha D_{j}))\right)D_{1,z}
$$
and
$$
\sum_{i=1}^\ell(s_{i})\ge \left(\min_{1\le j\le q}\sum_{\alpha=1}^{\infty}h^0(\mathcal{L}^N(-\alpha D_{j}))\right)D_{2,z}.
$$

Since $D_{1,z}$ and $D_{2,z}$ have no common components, it follows that
\begin{eqnarray}
\sum_{i=1}^\ell(s_{i})\ge \left(\min_{1\le j\le q}\sum_{\alpha=1}^{\infty}h^0(\mathcal{L}^N(-\alpha D_{j}))\right)(D_{1,z}+D_{2,z}).\label{eq3-4}
\end{eqnarray}

By (\ref{eq3-3}) and (\ref{eq3-4}), we have
\begin{eqnarray*}
\sum_{j=1}^q\lambda_{D_j}(f(z))&\le&\frac{m}{2}(\lambda_{D_{1,z}}(f(z))+\lambda_{D_{2,z}}(f(z)))+O(1)\\
&\le&\frac{m}{2}\frac{1}{\min_{1\le j\le q}\sum_{\alpha=1}^{\infty}h^0(\mathcal{L}^N(-\alpha D_{j}))}\max_{\mathcal{B}}\sum_{s_{i}\in \mathcal{B}}\lambda_{(s_{i})}(f(z))+O(1),
\end{eqnarray*}
where $\max_{\mathcal{B}}$ is taken over all bases $\mathcal{B}$ of $H^0(X,\mathcal{L}^N)$ with respect to $D_{\mu}$ and $D_{\nu}$ with $\mu\neq \nu$.

By using Theorem D with $\varepsilon$ taken as $\varepsilon':=\frac{\varepsilon \min_{1\le j\le q}\sum_{\alpha=1}^{\infty}h^0(\mathcal{L}^N(-\alpha D_{j}))}{2N}$, we obtain
\begin{eqnarray*}
&&\|\sum_{j=1}^qm_f(r,D_j)\\
&\le&\frac{m}{2}\frac{1}{\min_{1\le j\le q}\sum_{\alpha=1}^{\infty}h^0(\mathcal{L}^N(-\alpha D_{j}))}\int_0^{2\pi}\max_{\mathcal{B}}\sum_{s_{i}\in \mathcal{B}}\lambda_{(s_{i})}(f(re^{i\theta}))\frac{d\theta}{2\pi}+O(1)\\
&\le&\frac{m}{2}\frac{1}{\min_{1\le j\le q}\sum_{\alpha=1}^{\infty}h^0(\mathcal{L}^N(-\alpha D_{j}))}(\ell+\varepsilon')T_{f,\mathcal{L}^N}(r)+O(1).
\end{eqnarray*}
Since $T_{f,\mathcal{L}^N}(r)=NT_{f,\mathcal{L}}(r)$ and (\ref{eq3-2}), we have
$$
\|m_f(r,D)\le \frac{m}{2}\left(\max_{1\le j\le q}\gamma(\mathcal{L},D_j)+\varepsilon\right)T_{f,\mathcal{L}}(r)+O(1).
$$

If $X$ is not normal, then we consider the normalization $\pi:\widetilde{X}\rightarrow X$ and divisors $\pi^*D_j$ for all $j$. Notice that
$\pi^*D_j$ are still in $m$-subgeneral position and any two of them have no common components. Hence
$$
\|\sum_{j=1}^qm_{\widetilde{f}}(r,\pi^*D_j)\le \left(\frac{m}{2}\max_{1\le j\le q}\gamma(\pi^*\mathcal{L},\pi^*D_j)+\varepsilon\right)T_{\widetilde{f},\pi^*\mathcal{L}}(r),
$$
where $\widetilde{f}:\mathbb{C}\rightarrow \widetilde{X}$ is the lifting of $f$. As $\pi$ is a birational morphism, by the properties of Weil function and characteristic function, we have
$$
\|\sum_{j=1}^qm_{f}(r,D_j)\le \left(\frac{m}{2}\max_{1\le j\le q}\gamma(\pi^*\mathcal{L},\pi^*D_j)+\varepsilon\right)T_{f,\mathcal{L}}(r).
$$
\end{proof}

\section{Schmidt's subspace theorems}

In this section, we introduce the counterpart in number theory of our main results according to Vojta's dictionary which gives an analogue between Nevanlinna theory and Diophantine approximation (see \cite{11,12}).

Let $k$ be a number field. Denote by $M_k$ the set of places (i.e., equivalence classes of absolute values) of $k$ and write $M_k^\infty$ for the set of archimedean places of $k$.

Let $X$ be a projective variety defined over $k$, let $\mathcal{L}$ be a line sheaf on $X$ and let $D$ be an effective Cartier divisor. For every place $v\in M_k$, we can associate the local Weil functions $\lambda_{\mathcal{L},v}$ and $\lambda_{D,v}$ with respect to $v$, which have similar properties as the Weil function introduced in Section 2. Define
$$
h_{\mathcal{L}}({\bf x})=\sum_{v\in M_k}\lambda_{\mathcal{L},v}({\bf x})\ \mbox{for}\ {\bf x}\in X
$$
and
$$
m_S({\bf x}, D)=\sum_{v\in S}\lambda_{D,v}({\bf x})\ \mbox{for}\ {\bf x}\in X\setminus {\rm Supp}D,
$$
where $S$ is a finite subset of $M_k$ containing $M_k^{\infty}$.

Instead of Theorem D, we shall use the following general form of Schmidt's subspace theorem given by Ru and Vojta \cite{7}.

\noindent{\bf Theorem F.} (Theorem 2.7 in \cite{7})\quad \textit{Let $X$ be a projective variety defined over $k$, and let $\mathcal{L}$ be a line sheaf on $X$. Let $V$ be a linear subspace of $H^0(X,\mathcal{L})$ with $\dim V>1$, and let $s_1,\ldots,s_q$ be nonzero elements of $V$. For each $j=1,\ldots,q$, let $D_j$ be the Cartier divisor $(s_j)$. Let $S$ be a finite subset of $M_k$ containing $M_k^{\infty}$, let $\varepsilon>0$ and $c\in \mathbb{R}$. Then there is a proper Zariski-closed subset $Z$ of $X$ such that
\begin{eqnarray*}
\sum_{v\in S}\max_{\mathcal{K}}\sum_{j\in \mathcal{K}}\lambda_{D_j,v}({\bf x})\le (\dim V+\varepsilon)h_{\mathcal{L}}({\bf x})+c
\end{eqnarray*}
holds for all ${\bf x}\in (X\setminus Z)(k)$. Here $\max_{\mathcal{K}}$ is taken over all subsets $\mathcal{K}$ of $\{1,\ldots,q\}$ such that the sections $\{s_j\}_{j\in \mathcal{K}}$ are linearly independent.}

Now, we state the counterparts of Theorems \ref{thm1.4}---\ref{thm1.2}, whose proofs are similar and is therefore omitted here.

\begin{theorem}\label{thm5.1}
	Let $X$ be a projective variety of dimension $n$ defined over $k$, and let $D_1,\ldots,D_q$ be effective Cartier divisors in $m$-subgeneral position with index $\kappa(>1)$ on $X$. Suppose that there exists an ample divisor $A$ on $X$ and positive integers $d_j$ such that $D_j\sim d_jA$ for $j=1,\ldots,q$. Let $S$ be a finite subset of $M_k$ containing $M_k^{\infty}$. Then, for every $\varepsilon>0$,
	 \begin{eqnarray*}
	 	\sum_{j=1}^q\frac{1}{d_j}m_S({\bf x},D_j)\le \left(\max\left\{\frac{m}{2\kappa},1\right\}(n+1)+\varepsilon\right)h_{\mathcal{O}(A)}({\bf x})
	 \end{eqnarray*}
	 holds for all $k$-rational points outside a proper Zariski-closed subset.
\end{theorem}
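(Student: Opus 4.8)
The plan is to transcribe the proof of Theorem~\ref{thm1.4} into the Diophantine setting: one replaces the averaging operator $\int_0^{2\pi}(\,\cdot\,)\frac{d\theta}{2\pi}$ and the symbol ``$\|$'' (which signifies validity outside a set of finite measure) by the finite sum $\sum_{v\in S}(\,\cdot\,)$ and by an exceptional proper Zariski-closed subset, the characteristic function $T_{f,\mathcal{L}}(r)$ by the height $h_{\mathcal{L}}(\mathbf{x})$, the proximity function $m_f(r,D)$ by $m_S(\mathbf{x},D)$, and Theorem~D by Theorem~F. First I would run the same reductions: from $m_S(\mathbf{x},D_j)=\frac{d_j}{d}m_S(\mathbf{x},(d/d_j)D_j)+O(1)$, with $d$ the least common multiple of $d_1,\ldots,d_q$, it suffices to treat the case $D_j\sim dA$. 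Choosing $\widetilde{N}$ with $\widetilde{N}A$ very ample and $dq\mid\widetilde{N}$, and letting $\phi\colon X\hookrightarrow\mathbb{P}^M(\mathbb{C})$ be the associated embedding, one obtains hyperplanes $H_1,\ldots,H_q$ with $\frac{\widetilde{N}}{d}D_j=\phi^*H_j$ that are in $m$-subgeneral position with index $\kappa$ on $\phi(X)$, together with the $v$-adic relations $\lambda_{H_j,v}(\phi(\mathbf{x}))=\frac{\widetilde{N}}{d}\lambda_{D_j,v}(\mathbf{x})+O(1)$ for every $v\in M_k$ (the analogue of \eqref{eqN-1-a}), by functoriality and additivity of Weil functions.

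Next I would reproduce, unchanged at the level of commutative algebra, the generalized Corvaja--Zannier filtration of $\widehat{V_N}$ used in the proof of Theorem~\ref{thm1.4} (Lemmas~\ref{lemN3.4}--\ref{lemN3.8}), together with the joint-filtration argument built on Lemma~\ref{lem3.1}. Fixing $v\in S$ and $\mathbf{x}\in X(k)$ and ordering the divisors so that $\lambda_{D_{1,v}}(\mathbf{x})\ge\cdots\ge\lambda_{D_{q,v}}(\mathbf{x})$, one gets $\frac{\widetilde{N}}{d}\sum_{j=1}^q\lambda_{D_j,v}(\mathbf{x})\le\sum_{j=1}^m\lambda_{H_{j,v}}(\phi(\mathbf{x}))+O(1)$ as in \eqref{eq3-16}. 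Using ${\rm codim}\bigcap_{j=1}^{\kappa}H_{j,v}\cap\phi(X)=\kappa$, complete the defining forms of $H_{1,v},\ldots,H_{\kappa,v}$ to $r_1,\ldots,r_n$ in general position on $\phi(X)$, build a second such filtration out of $H_{\kappa+1,v},\ldots,H_{\min\{2\kappa,m\},v}$, and apply Lemma~\ref{lem3.1} to extract a single basis $\mathcal{B}=\{s_1,\ldots,s_{H_{\phi(X)}(N)}\}$ of $\widehat{V_N}$ adapted to both filtrations. Since the index $\kappa>1$ forces $D_i$ and $D_j$ to have no common component for $i\ne j$, the two vanishing estimates combine (cf.\ \eqref{eq3-20}) into
\[
\sum_{i=1}^{H_{\phi(X)}(N)}\phi^*(s_i)\ \ge\ \Bigl(\deg\phi(X)\,\tfrac{N^{n+1}}{(n+1)!}+O(N^{n})\Bigr)\cdot\tfrac{\widetilde{N}}{d}\cdot\bigl(D_{1,v}+\cdots+D_{\min\{2\kappa,m\},v}\bigr),
\]
whence, together with the proximity inequality, for every $v\in S$ and every $\mathbf{x}\in X(k)$,
\[
\widetilde{N}\sum_{j=1}^q\tfrac{1}{d}\lambda_{D_j,v}(\mathbf{x})\ \le\ \max\Bigl\{\tfrac{m}{2\kappa},1\Bigr\}\frac{\sum_{i=1}^{H_{\phi(X)}(N)}\lambda_{\phi^*(s_i),v}(\mathbf{x})}{\deg\phi(X)\frac{N^{n+1}}{(n+1)!}+O(N^{n})}+O(1),
\]
where $\mathcal{B}$ depends on $(v,\mathbf{x})$ only through the chosen ordering and the finitely many auxiliary hyperplanes, so it ranges over a finite collection $\{\mathcal{B}^{(1)},\ldots,\mathcal{B}^{(T)}\}$ of bases, independent of $v$ and $\mathbf{x}$; here $\mathcal{L}=\mathcal{O}(D)$, $\widehat{N}=N\widetilde{N}/(dq)$, and $\phi^*\widehat{V_N}\subset H^0(X,\mathcal{L}^{\widehat{N}})$.

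Then I would sum over $v\in S$, bound the inner sum by $\max_{1\le t\le T}\sum_{s_i\in\mathcal{B}^{(t)}}\lambda_{\phi^*(s_i),v}(\mathbf{x})$, and apply Theorem~F on $X$ with the line sheaf $\mathcal{L}^{\widehat{N}}$, the subspace $V=\phi^*\widehat{V_N}$ (so $\dim V=H_{\phi(X)}(N)$), the sections $\phi^*(s_i)$, and $\varepsilon=\tfrac12$; the finitely many exceptional sets arising from the $T$ basis choices are then amalgamated into a single proper Zariski-closed subset of $X$. Using $H_{\phi(X)}(N)=\deg\phi(X)\frac{N^n}{n!}+O(N^{n-1})$, the height identity $h_{\mathcal{L}^{\widehat{N}}}(\mathbf{x})=N\widetilde{N}\,h_{\mathcal{O}(A)}(\mathbf{x})+O(1)$, and then letting $N\to\infty$ to absorb the $O(N^{n})$ and $O(1)$ error terms, one reaches the coefficient $\max\{\tfrac{m}{2\kappa},1\}(n+1)+\varepsilon$, which is exactly the asserted bound. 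If $X$ is not normal, pass to the normalization $\pi\colon\widetilde{X}\to X$, which is defined over $k$ since $k$ is perfect: $\pi^*A$ is ample and $\pi^*D_1,\ldots,\pi^*D_q$ are still in $m$-subgeneral position with index $\kappa$, and the birational invariance, up to $O(1)$, of heights and Weil functions pushes the estimate back down to $X$.

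The step I expect to be the main obstacle is, just as in the analytic proof, the joint-filtration bookkeeping: verifying that Lemma~\ref{lem3.1} does produce one basis realizing both vanishing lower bounds simultaneously, and that the totality of bases and of auxiliary-hyperplane choices occurring, as $v$ ranges over $S$ and $\mathbf{x}$ over $X(k)$, remains finite --- this finiteness is precisely what allows the finitely many exceptional subsets furnished by Theorem~F to be combined into one proper Zariski-closed subset. The remainder is a mechanical transcription, since Theorem~F is the exact arithmetic analogue of Theorem~D and the algebraic input is verbatim the same as in Section~3.
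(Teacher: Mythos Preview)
Your proposal is correct and follows exactly the approach the paper indicates: the authors state Theorem~\ref{thm5.1} and explicitly omit its proof, noting that it is obtained from the proof of Theorem~\ref{thm1.4} by the standard Vojta-dictionary substitutions (Theorem~F for Theorem~D, $\sum_{v\in S}$ for $\int_0^{2\pi}\frac{d\theta}{2\pi}$, heights for characteristic functions, an exceptional Zariski-closed set for the ``$\|$'' exceptional set). One minor simplification: since each $\mathcal{B}^{(t)}$ is a basis of $\phi^*\widehat{V_N}$, a \emph{single} application of Theorem~F to the union $\bigcup_t\mathcal{B}^{(t)}$ already bounds $\sum_{v\in S}\max_t\sum_{s_i\in\mathcal{B}^{(t)}}\lambda_{\phi^*(s_i),v}$ and produces one exceptional set, so no amalgamation step is needed.
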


Theorem \ref{thm5.1} gives an improvement of Theorems 2 and 2' in \cite{10}, moreover, the complete intersection condition for $X$ is dropped.

\begin{theorem}
	Let $X$ be a projective variety of dimension $n$ defined over $k$, and let $D_1,\ldots,D_q$ be effective Cartier divisors in $m$-subgeneral position with index $\kappa$ on $X$. Suppose that there exists an ample divisor $A$ on $X$ and positive integers $d_j$ such that $D_j\sim d_jA$ for $j=1,\ldots,q$. Let $S$ be a finite subset of $M_k$ containing $M_k^{\infty}$. Then, for every $\varepsilon>0$,
	\begin{eqnarray*}
		\sum_{j=1}^q\frac{1}{d_j}m_S({\bf x},D_j)\le \left(\left(\frac{m-n}{\max\{1,\min\{m-n,\kappa\}\}}+1\right)(n+1)+\varepsilon\right)h_{\mathcal{O}(A)}({\bf x})
	\end{eqnarray*}
	holds for all $k$-rational points outside a proper Zariski-closed subset of $X$.
\end{theorem}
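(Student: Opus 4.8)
The plan is to transcribe the proof of Theorem \ref{thm1.3} into the arithmetic setting, replacing the analytic ingredients (Theorem D, the function-field form of Theorem E, functoriality of proximity and characteristic functions) by their number-theoretic counterparts (Theorem F, the Schmidt subspace theorem for hypersurfaces of a projective variety in general position, and functoriality of Weil and height functions). First I would dispose of the case $m=n$: then the divisors are in general position and the asserted bound is $(n+1)$, which is exactly the Schmidt subspace theorem for divisors linearly equivalent to a fixed ample $A$ (the arithmetic analogue of Theorem E, Theorem 22.5 in \cite{12}); so it suffices to treat $m>n$. Next, putting $d=\mathrm{lcm}(d_1,\dots,d_q)$ and using $m_S({\bf x},D_j)=\frac{d_j}{d}\,m_S({\bf x},(d/d_j)D_j)+O(1)$, I may replace each $D_j$ by $\frac{d}{d_j}D_j$ and assume $D_j\sim dA$ for all $j$. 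Choosing a positive integer $\widetilde N$ with $\widetilde NA$ very ample and $dq\mid\widetilde N$, the canonical embedding $\phi\colon X\hookrightarrow\mathbb{P}^M$ attached to $\widetilde NA$ satisfies $\frac{\widetilde N}{d}D_j=\phi^*H_j$ for hyperplanes $H_j$; the $H_j\cap\phi(X)$ are again in $m$-subgeneral position with index $\kappa$, and for every place $v$ one has $\lambda_{H_j,v}(\phi({\bf x}))=\frac{\widetilde N}{d}\lambda_{D_j,v}({\bf x})+O(1)$, hence $m_S(\phi({\bf x}),H_j)=\frac{\widetilde N}{d}m_S({\bf x},D_j)+O(1)$.

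The core is the same place-by-place estimate as in the proof of Theorem \ref{thm1.3}. For each $v\in S$ and each ${\bf x}$ reorder the indices so that $\lambda_{H_{1},v}(\phi({\bf x}))\ge\cdots\ge\lambda_{H_{q},v}(\phi({\bf x}))$; the arithmetic analogue of \eqref{eq3-1} (a standard compactness/finiteness argument, since $X$ and the $D_j$ are fixed) gives $\sum_{j=1}^q\lambda_{H_j,v}(\phi({\bf x}))\le\sum_{j=1}^m\lambda_{H_{j},v}(\phi({\bf x}))+O(1)$ because at most $m$ divisors can be $v$-adically close to ${\bf x}$. Then I run Si's construction \cite{6.5} applied to the top $m$ divisors to produce $\widetilde H_{1},\dots,\widetilde H_{n}$ in general position on $\phi(X)$ with $\sum_{j=1}^m\lambda_{H_{j},v}\le\sum_{i=1}^n\lambda_{\widetilde H_{i},v}+\sum_{j=2}^{m-n+1}\lambda_{H_{j},v}+O(1)$, and I bound the leftover $m-n$ terms using the index exactly as in \eqref{eqN-4}: if $\kappa\ge m-n$ these $H_{j}$ are already in general position, and if $\kappa<m-n$ then $\sum_{j=2}^{m-n+1}\lambda_{H_{j},v}\le\frac{m-n}{\kappa}\sum_{j=2}^{\kappa+1}\lambda_{H_{j},v}$. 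Since there are only finitely many $m$-element subsets of $\{H_1,\dots,H_q\}$ together with their orderings, all hyperplanes produced this way form a finite set $\{\widetilde{\widetilde H}_1,\dots,\widetilde{\widetilde H}_T\}$ of divisors on $\phi(X)$, and summing over $v\in S$ yields
$$\sum_{j=1}^q m_S(\phi({\bf x}),H_j)\le\left(\tfrac{m-n}{\min\{m-n,\kappa\}}+1\right)\sum_{v\in S}\max_{\mathcal K}\sum_{j\in\mathcal K}\lambda_{\widetilde{\widetilde H}_j,v}(\phi({\bf x}))+O(1),$$
the maximum being over all $\mathcal K\subset\{1,\dots,T\}$ with $\{\widetilde{\widetilde H}_j\}_{j\in\mathcal K}$ in general position on $\phi(X)$.

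It remains to apply the arithmetic analogue of Theorem E (the Schmidt subspace theorem for hypersurfaces of a projective variety in general position, Theorem 22.5 in \cite{12}, which itself follows from Theorem F by the filtration argument behind Theorem A) to bound the right-hand side by $(n+1+\varepsilon)h_{\phi(X)}({\bf x})$ for all $k$-rational points outside a proper Zariski-closed subset. Translating back with $h_{\phi(X)}({\bf x})=\widetilde N\,h_{\mathcal O(A)}({\bf x})+O(1)$ and the relation $m_S(\phi({\bf x}),H_j)=\frac{\widetilde N}{d}m_S({\bf x},D_j)+O(1)$ gives the stated inequality, with $\frac{m-n}{\min\{m-n,\kappa\}}$ upgraded to $\frac{m-n}{\max\{1,\min\{m-n,\kappa\}\}}$ so as to also cover the $m=n$ case settled at the outset. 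I expect the main obstacle to be purely bookkeeping rather than a hard estimate: one must make sure Si's hyperplane construction is carried out uniformly in ${\bf x}$ and $v$ so that only finitely many configurations occur — this is precisely what legitimizes a single invocation of the Schmidt-type theorem — and one must cite the correct arithmetic version of Theorem E; the rest is a formal transcription of the Nevanlinna argument under Vojta's dictionary.
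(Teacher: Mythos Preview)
Your proposal is correct and matches the paper's approach exactly: the paper states that the proofs of the arithmetic theorems in Section~5 are similar to their Nevanlinna counterparts and omits them, so transcribing the proof of Theorem~\ref{thm1.3} under Vojta's dictionary---replacing Theorem~E by its arithmetic version (Theorem~22.5 in \cite{12}) and working place-by-place---is precisely what is intended. Your identification of the only nontrivial bookkeeping point, namely that Si's hyperplane construction depends only on the ordered $m$-tuple of indices and hence yields finitely many configurations uniformly in $v$ and ${\bf x}$, is also on target.
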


\begin{theorem}
Let $X$ be a smooth projective variety defined over $k$, and let $D_1,\ldots,D_q$ be effective divisors in $m$-subgeneral position with index $\kappa$ on $X$. Let $D=D_1+\cdots+D_q$. Let $\mathcal{L}$ be a line sheaf on $X$ with $h^0(\mathcal{L}^N)> 1$ for $N$ big enough. Let $S$ be a finite subset of $M_k$ containing $M_k^{\infty}$. Then, for every $\varepsilon>0$,
\begin{eqnarray*}
	m_S({\bf x},D)\le \left(\frac{m}{\kappa}\max_{1\le j\le q}\gamma(\mathcal{L},D_j)+\varepsilon\right)h_{\mathcal{L}}({\bf x})
\end{eqnarray*}
holds for all $k$-rational points outside a proper Zariski-closed subset of $X$.	
\end{theorem}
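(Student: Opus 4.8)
The plan is to follow the proof of Theorem \ref{thm1.2} essentially word for word, translating each analytic ingredient into its arithmetic counterpart via Vojta's dictionary: the holomorphic curve $f$ is replaced by a variable $k$-rational point $\mathbf{x}$; the proximity function $m_f(r,D_j)$ by $m_S(\mathbf{x},D_j)$; the characteristic function $T_{f,\mathcal{L}}(r)$ by the height $h_{\mathcal{L}}(\mathbf{x})$; the symbol ``$\|$'' (an estimate valid for all large $r$ outside a set of finite Lebesgue measure) by the phrase ``valid for all $\mathbf{x}$ outside a proper Zariski-closed subset''; and Cartan's second main theorem (Theorem D) by the Schmidt subspace theorem (Theorem F). The only structural change is that the integral over $\theta$ becomes a sum over the finite place set $S$.

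First, given $\varepsilon>0$ I would fix an integer $N\ge N_0$ with $h^0(\mathcal{L}^N)>1$ and an auxiliary integer $b$ satisfying (\ref{eeq4.23}), exactly as there. Then, for each $v\in S$ and each point $\mathbf{x}$, relabel $D_1,\dots,D_q$ so that $\lambda_{D_1,v}(\mathbf{x})\ge\cdots\ge\lambda_{D_q,v}(\mathbf{x})$; by $m$-subgeneral position with index $\kappa$ (the analogue of (\ref{eq3-7})),
$$
\sum_{j=1}^q\lambda_{D_j,v}(\mathbf{x})\le\sum_{j=1}^m\lambda_{D_{j,v},v}(\mathbf{x})+O(1)\le\frac{m}{\kappa}\sum_{j=1}^{\kappa}\lambda_{D_{j,v},v}(\mathbf{x})+O(1),
$$
where $D_{1,v},\dots,D_{\kappa,v}$ are in general position. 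Since $X$ is smooth these $\kappa$ divisors intersect properly, so I can form the filtration of $H^0(X,\mathcal{L}^N)$ with respect to $\{D_{1,v},\dots,D_{\kappa,v}\}$ used in the proof of Theorem \ref{thm1.2} (constructed in \cite{9,7}), together with an adapted basis $\mathcal{B}_{\mathbf{a}}$, and invoke the same two inputs: the lower bound $F(\mathbf{a})\ge\min_{1\le j\le q}\frac{1}{h^0(\mathcal{L}^N)}\sum_{\alpha=1}^{\infty}h^0(\mathcal{L}^N(-\alpha D_j))$ (Theorem 3.6 in \cite{9}, Proposition 4.14 in \cite{7}), and, after choosing $\mathbf{a}\in\Delta$ suitably, the local estimate $\lambda_{(s),v}(\mathbf{x})\ge\frac{b\mu_{\mathbf{a}}(s)}{b+\kappa}\sum_{j=1}^{\kappa}\lambda_{D_{j,v},v}(\mathbf{x})+O(1)$ for $s\in\mathcal{B}_{\mathbf{a}}$. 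Summing over $s\in\mathcal{B}_{\mathbf{a}}$ and combining with the displayed inequality gives, for each $v\in S$, the analogue of (\ref{eq3-12}):
$$
\frac{b}{b+\kappa}\Bigl(\min_{1\le j\le q}\sum_{\alpha=1}^{\infty}h^0(\mathcal{L}^N(-\alpha D_j))\Bigr)\frac{\kappa}{m}\sum_{j=1}^q\lambda_{D_j,v}(\mathbf{x})\le\sum_{s\in\mathcal{B}_{\mathbf{a}}}\lambda_{(s),v}(\mathbf{x})+O(1).
$$

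Next, since $S$ is finite and there are only finitely many $\kappa$-subsets of $\{1,\dots,q\}$ and finitely many $\mathbf{a}\in\Delta$, the union of all the bases $\mathcal{B}_{\mathbf{a}}$ occurring this way is a finite family $\{s_1,\dots,s_{T_2}\}$ of sections of $\mathcal{L}^N$; the relabeling and the choice of $\mathbf{a}$ (both depending on $v$ and $\mathbf{x}$) are absorbed by taking, for each $v$, a maximum over the corresponding finite collection of index sets $J_i\subseteq\{1,\dots,T_2\}$. Summing the last display over $v\in S$, using $\sum_{v\in S}\sum_{j=1}^q\lambda_{D_j,v}(\mathbf{x})=m_S(\mathbf{x},D)+O(1)$, and applying Theorem F to the divisors $(s_1),\dots,(s_{T_2})$ with $\varepsilon$ replaced by a sufficiently small $\varepsilon'$ (chosen as in the proof of Theorem \ref{thm1.2}) yields a single proper Zariski-closed $Z\subset X$, namely the union of the finitely many exceptional sets returned by Theorem F, such that $\sum_{v\in S}\max_i\sum_{j\in J_i}\lambda_{(s_j),v}(\mathbf{x})\le(h^0(\mathcal{L}^N)+\varepsilon')h_{\mathcal{L}^N}(\mathbf{x})+c$ for all $\mathbf{x}\in(X\setminus Z)(k)$. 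Substituting $h_{\mathcal{L}^N}(\mathbf{x})=Nh_{\mathcal{L}}(\mathbf{x})+O(1)$, the definition of $\gamma(\mathcal{L},D_j)$, and the initial choices of $N$, $b$ and $\varepsilon'$, one recovers $m_S(\mathbf{x},D)\le\bigl(\tfrac{m}{\kappa}\max_{1\le j\le q}\gamma(\mathcal{L},D_j)+\varepsilon\bigr)h_{\mathcal{L}}(\mathbf{x})$ on $(X\setminus Z)(k)$, which is the claim.

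The one genuinely new point, and the step where care is needed, is the bookkeeping of the dependence on the place $v\in S$: one must ensure that the relabeling of the $D_j$ and the selection of the filtration data range over only finitely many discrete configurations, so that a \emph{single} finite family of sections, and hence a single exceptional set $Z$, works for all $\mathbf{x}$ and all $v$ at once. Everything else --- the construction of the filtration, the two key inequalities for $F(\mathbf{a})$ and for $\lambda_{(s),v}(\mathbf{x})$, and the concluding arithmetic with $N$, $b$ and $\varepsilon$ --- transcribes verbatim from the proof of Theorem \ref{thm1.2} under the substitutions $m_f(r,\cdot)\leftrightarrow m_S(\mathbf{x},\cdot)$, $T_{f,\mathcal{L}}(r)\leftrightarrow h_{\mathcal{L}}(\mathbf{x})$ and Theorem D $\leftrightarrow$ Theorem F.
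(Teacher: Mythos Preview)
Your proposal is correct and follows exactly the approach the paper intends: the paper omits the proof entirely, stating only that it is similar to the proof of Theorem \ref{thm1.2} with Theorem F replacing Theorem D. Your careful attention to the bookkeeping over the finite place set $S$ (ensuring a single finite family of sections and hence a single exceptional $Z$) is precisely the one point that requires verification in the translation, and you have handled it correctly.
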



\begin{thebibliography}{99}


\bibitem{9} Autissier, Pascal. Sur la non-densit�� des points entiers. (French) [On the nondensity of integral points] Duke Math. J. 158 (2011), no. 1, 13--27. MR2794367

\bibitem{4} Chen, ZhiHua; Ru, Min; Yan, QiMing. The degenerated second main theorem and Schmidt's subspace theorem. Sci. China Math. 55 (2012), no. 7, 1367--1380. MR2943780

\bibitem{10-1} Cherry, William; Dethloff, Gerd;  Tran Van Tan. Holomorphic curves into algebraic varieties intersecting moving hypersurface targets. arXiv:1503.08801v2

\bibitem{10} Corvaja, Pietro; Zannier, Umberto. On a general Thue's equation. Amer. J. Math. 126 (2004), no. 5, 1033--1055. MR2089081

\bibitem{8-3} Hussein, Saud; Ru, Min. An general defect relation and height inequality for divisors in subgeneral position. to appear in Asian J. of Math.

\bibitem{JY} Ji, Qingchun; Yu, Guangsheng. On the Second Main Theorem of Nevanlinna Theory for Singular Divisors with $(k, \ell)$-conditions. to appear in Asian J. of Math.

\bibitem{8} Levin, Aaron. Generalizations of Siegel's and Picard's theorems. Ann. of Math. (2) 170 (2009), no. 2, 609--655. MR2552103

\bibitem{5} Levin, Aaron. On the Schmidt subspace theorem for algebraic points. Duke Math. J. 163 (2014), no. 15, 2841--2885. MR3285859


\bibitem{Hart} Hartshorne, Robin. Algebraic Geometry. Springer-Verlag, 1975. MR0463157

\bibitem{1} Ru, Min. A defect relation for holomorphic curves intersecting hypersurfaces. Amer. J. Math. 126 (2004), no. 1, 215--226. MR2033568

\bibitem{3} Ru, Min. Holomorphic curves into algebraic varieties. Ann. of Math. (2) 169 (2009), no. 1, 255--267. MR2480605

\bibitem{10-2} Ru, Min. A defect relation for holomorphic curves intersecting general divisors in projective varieties. J. Geom. Anal. 26 (2016), no. 4, 2751--2776. MR3544939

\bibitem{7} Ru, Min; Vojta, Paul. A birational Nevanlinna constant and its consequences. arXiv:1608.05382.

\bibitem{6} Shi, Lei; Ru, Min. An improvement of Chen-Ru-Yan's degenerated second main theorem. Sci. China Math. 58 (2015), no. 12, 2517--2530. MR3429265

\bibitem{2} Shiffman, Bernard. On holomorphic curves and meromorphic maps in projective space. Indiana Univ. Math. J. 28 (1979), no. 4, 627--641. MR0542949

\bibitem{6.5} Si, Duc Quang. Degeneracy second main theorems for meromorphic mappings into projective varieties
with hypersurfaces. arXiv:1610.03951.

\bibitem{8-1} Silverman, Joseph H. Arithmetic distance functions and height functions in Diophantine geometry. Math. Ann. 279 (1987), no. 2, 193--216. MR0919501

\bibitem{11} Vojta, Paul. Diophantine approximations and value distribution theory. Lecture Notes in Mathematics, 1239. Springer-Verlag, Berlin, 1987. x+132 pp. ISBN: 3-540-17551-2 MR0883451

\bibitem{12} Vojta, Paul. Diophantine approximation and Nevanlinna theory. Arithmetic geometry, 111--224, Lecture Notes in Math., 2009, Springer, Berlin, 2011. MR2757629

\bibitem{8-2} Yamanoi, Katsutoshi. Algebro-geometric version of Nevanlinna's lemma on logarithmic derivative and applications. Nagoya Math. J. 173 (2004), 23--63. MR2041755

\end{thebibliography}
\end{document}